\numberwithin{equation}{section}
\theoremstyle{plain}
\newtheorem{thm}{\protect\theoremname}[section]
\theoremstyle{plain}
\newtheorem{conjecture}[thm]{\protect\conjecturename}
\theoremstyle{plain}
\newtheorem{cor}[thm]{\protect\corollaryname}
\theoremstyle{remark}
\newtheorem{rem}[thm]{\protect\remarkname}
\theoremstyle{plain}
\newtheorem{lem}[thm]{\protect\lemmaname}
\theoremstyle{plain}
\newtheorem{prop}[thm]{\protect\propositionname}
\setlist[enumerate]{label={\upshape(\arabic*)},leftmargin=*}
\setlist[itemize]{leftmargin=*}
\newtheorem{ConditionAB}{Condition}
\providecommand{\conjecturename}{Conjecture}
\providecommand{\corollaryname}{Corollary}
\providecommand{\lemmaname}{Lemma}
\providecommand{\propositionname}{Proposition}
\providecommand{\remarkname}{Remark}
\providecommand{\theoremname}{Theorem}
\begin{document}
\title{B\"ottcher coordinates at wild superattracting fixed points}
\author{Hang Fu}
\address{IAZD, Leibniz Universit\"at Hannover, Welfengarten 1, 30167 Hannover,
Germany}
\email{drfuhang@gmail.com}
\urladdr{https://sites.google.com/view/hangfu}
\author{Hongming Nie}
\address{Institute for Mathematical Sciences, Stony Brook University}
\email{hongming.nie@stonybrook.edu}
\urladdr{https://sites.google.com/view/hmnie}
\date{\today}
\begin{abstract}
Let $p$ be a prime number, let $g(x)=x^{p^{2}}+p^{r+2}x^{p^{2}+1}$
with $r\in\mathbb{Z}_{\geq0}$, and let $\phi(x)=x+O(x^{2})$ be the
B\"ottcher coordinate satisfying $\phi(g(x))=\phi(x)^{p^{2}}$. Salerno
and Silverman conjectured that the radius of convergence of $\phi^{-1}(x)$
in $\mathbb{C}_{p}$ is $p^{-p^{-r}/(p-1)}$. In this article, we
confirm that this conjecture is true by showing that it is a special
case of our more general result.
\end{abstract}

\subjclass[2020]{37P05}
\keywords{B\"ottcher coordinates}
\maketitle

\section{\label{sec1} Introduction}

Let $K$ be a field of characteristic $0$ and let $g(x)=x^{d}+O(x^{d+1})\in K\left\llbracket x\right\rrbracket $
with $d\geq2$. Then there is a unique \textbf{B\"ottcher coordinate}
$\phi(x)=x+O(x^{2})\in K\left\llbracket x\right\rrbracket $ satisfying
$\phi(g(x))=\phi(x)^{d}$. It can be seen that
\[
\phi(x)=\lim_{n\to\infty}g^{n}(x)^{-1/d^{n}}.
\]

While the B\"ottcher coordinate over $K=\mathbb{C}$ has become a
fundamental tool in the area of complex dynamics (see, for example,
\cite[Chapter 9]{Mil06} for more details), its analogue over $K=\mathbb{C}_{p}$
has only been studied from the last decade. Ingram \cite{Ing13} used
$p$-adic B\"ottcher coordinates to study arboreal Galois representations.
DeMarco et al. \cite{DGKNTY19} used $p$-adic B\"ottcher coordinates
to prove a theorem of unlikely intersections. Salerno and Silverman
\cite{SS20} studied the integrality properties of some $p$-adic
B\"ottcher coordinates. In particular, they proposed the following
conjecture \cite[Conjecture 27]{SS20}.
\begin{conjecture}
[Salerno and Silverman] \label{conj1.1} Let $p$ be a prime number,
let
\[
g(x)=x^{p^{2}}+p^{r+2}x^{p^{2}+1}
\]
with $r\in\mathbb{Z}_{\geq0}$, and let $\phi(x)=x+O(x^{2})$ be the
B\"ottcher coordinate satisfying $\phi(g(x))=\phi(x)^{p^{2}}$. Then
the radius of convergence of $\phi^{-1}(x)$ in $\mathbb{C}_{p}$
is $p^{-p^{-r}/(p-1)}$.
\end{conjecture}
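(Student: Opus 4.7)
The plan is to convert the Böttcher functional equation into an explicit infinite product representation for $\phi$, analyze its convergence in $\mathbb{C}_{p}$ using the $p$-adic logarithm and exponential, and then transfer the resulting disk of analyticity to $\phi^{-1}$.

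First I would write $\phi(x)=x\cdot h(x)$ with $h(x)\in 1+x\mathbb{C}_{p}\llbracket x\rrbracket$, so that $\phi(g(x))=\phi(x)^{p^{2}}$ takes the multiplicative form $h(x)^{p^{2}}=(1+p^{r+2}x)\,h(g(x))$. Iterating $n$ times and letting $n\to\infty$ yields the formal identity
\[
h(x) \;=\; \prod_{k=0}^{\infty}\bigl(1+p^{r+2}g^{k}(x)\bigr)^{1/p^{2(k+1)}} \;=\; \exp\!\left(\sum_{k=0}^{\infty}\frac{1}{p^{2(k+1)}}\log\bigl(1+p^{r+2}g^{k}(x)\bigr)\right),
\]
whose region of $p$-adic convergence is the candidate disk of analyticity for $\phi$, and hence for $\phi^{-1}$.

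The key analytic step is to determine the sharp convergence condition. For $|x|<1$ one has $|g^{k}(x)|=|x|^{p^{2k}}$ (since $|1+p^{r+2}g^{j}(x)|=1$ for every $j$), so each logarithm converges and its dominant contribution is $p^{r-2k}g^{k}(x)$. Demanding that the inner sum lie in the convergence disk $\{|z|<p^{-1/(p-1)}\}$ of $\exp$ gives, for each $k\geq 0$, the constraint $v_{p}(x)>(c-r+2k)/p^{2k}$ with $c:=1/(p-1)$; higher-order terms of $\log$ produce strictly weaker constraints, which a short computation confirms. The optimization then reduces to establishing
\[
\sup_{k\geq 0}\frac{c-r+2k}{p^{2k}} \;=\; \frac{p^{-r}}{p-1},
\]
the supremum being attained at $k=r/2$ for even $r$ and at $k=(r+1)/2$ for odd $r$ (both cases follow from the identity $c(p-1)=1$). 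This fixes the candidate radius at $R:=p^{-p^{-r}/(p-1)}$. On $\{|x|<R\}$ one checks that $|h(x)-1|<1$, so $|\phi(x)|=|x|$ and $\phi$ is a $p$-adic analytic isomorphism onto $\{|y|<R\}$; inverting then gives $\phi^{-1}$ analytic on this disk, with radius of convergence at least $R$.

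The main obstacle is the matching \emph{lower} bound, namely that $\phi^{-1}$ cannot be extended past $|y|=R$. I would tackle this by analyzing the recursion for the Taylor coefficients $a_{n}$ of $\phi^{-1}(y)=\sum a_{n}y^{n}$ coming from $\phi^{-1}(y^{p^{2}})=g(\phi^{-1}(y))$, and exhibiting an explicit subsequence of indices — associated to the optimizing $k^{\ast}$ of the supremum above — along which $v_{p}(a_{n})/n\to -p^{-r}/(p-1)$. Matching coefficients of $y^{p^{2}+1}$ in the recursion already forces $a_{2}=-p^{r}$, giving the first piece of data and revealing the underlying mechanism; extending this computation and tracking the $p$-adic cancellations in the multinomial expansion of $\phi^{-1}(y)^{p^{2}}\bigl(1+p^{r+2}\phi^{-1}(y)\bigr)$ along the critical subsequence is the technical heart of the proof, and is precisely where the more general theorem announced in the introduction does the essential bookkeeping. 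Once this sharpness is established, Conjecture~\ref{conj1.1} follows as a specialization.
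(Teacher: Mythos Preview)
Your infinite-product/$\exp$--$\log$ route to the threshold $R=p^{-p^{-r}/(p-1)}$ is a genuinely different argument for the \emph{convergence} direction than the paper's, and the optimization $\sup_{k\ge0}(c-r+2k)/p^{2k}=cp^{-r}$ (with $c=1/(p-1)$, maximizer $k^{\ast}=\lceil r/2\rceil=\lfloor(r+1)/2\rfloor$) is correct; this $k^{\ast}$ is exactly the paper's parameter $N$. The paper never writes $\phi$ as $\exp(\sum\cdots)$. Instead it computes the valuations $v_{p}(b_{n})$ of the Taylor coefficients of $\phi_{c}$ \emph{exactly}: first it solves the simpler recursion~(1.3) to obtain a closed formula for $v_{p}(a_{n})$ (Proposition~3.3), then it shows that passing from~(1.3) to the perturbed recursion~(1.4) leaves the valuations unchanged, $v_{p}(b_{n})=v_{p}(a_{n})$ (Proposition~4.2). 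The explicit formula yields convergence and divergence simultaneously, and the same formula, fed through Lagrange--B\"urmann (Lemma~2.5 and Proposition~3.5), handles $\phi_{c}^{-1}$. Your approach trades this coefficient bookkeeping for an analytic picture, which is conceptually nice for the upper bound on growth.

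The gap is the sharpness step, and you say so yourself. Your $\exp$ argument shows only that the representation converges on $|x|<R$; it does not exclude that the power series $\phi$ (and hence $\phi^{-1}$) has a strictly larger radius, since cancellation inside $\exp$ could in principle extend it. To close the argument you must produce a subsequence of coefficients of $\phi^{-1}$ with $v_{p}(a_{n})/n\to -p^{-r}/(p-1)$, and your last paragraph defers this entirely to ``the more general theorem announced in the introduction.'' But that theorem \emph{is} the proof: its content is precisely the multinomial cancellation-tracking you describe, and it already delivers both directions. So as written your proposal establishes the easy half by a different method and then invokes the paper's result for the half that carries the weight. To make it self-contained you would need an independent divergence argument---for instance, showing directly that along a sequence of indices tied to $k^{\ast}$ the dominant term in the coefficient recursion has the required valuation---and that is essentially what Propositions~3.3--3.5 and~4.2 do.
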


In this article, we will prove a generalization of Conjecture \ref{conj1.1}.
Before stating our main results, we first briefly explain how we approach
the solution of this problem.

Let $f_{c}(z)=z^{d}-c$ for some $c\in\mathbb{C}_{p}$ and let
\begin{equation}
\varphi_{c}(z)=z\left(1+\sum_{n=1}^{\infty}\frac{a_{n}}{z^{nd}}\right)\label{eq1.1}
\end{equation}
satisfy the functional equation
\begin{equation}
f_{c}(\varphi_{c}(z))=\varphi_{c}(z^{d}).\label{eq1.2}
\end{equation}
Note that here $\varphi_{c}(z)$ is the inverse of the B\"ottcher
coordinate, not the B\"ottcher coordinate itself. Let $x=z^{-d}$,
then (\ref{eq1.2}) can be simplified as
\begin{equation}
\left(1+\sum_{n=1}^{\infty}a_{n}x^{n}\right)^{d}=1+cx+\sum_{n=1}^{\infty}a_{n}x^{nd}.\label{eq1.3}
\end{equation}
Let $g_{c}(x)=x^{d}+cx^{d+1}$ for some $c\in\mathbb{C}_{p}$ and
let
\[
\phi_{c}(x)=x\left(1+\sum_{n=1}^{\infty}b_{n}x^{n}\right)
\]
satisfy the B\"ottcher equation
\[
\phi_{c}(g_{c}(x))=\phi_{c}(x)^{d}.
\]
Then it can be simplified as
\begin{equation}
\left(1+\sum_{n=1}^{\infty}b_{n}x^{n}\right)^{d}=1+cx+\sum_{n=1}^{\infty}b_{n}x^{nd}(1+cx)^{n+1}.\label{eq1.4}
\end{equation}

Instead of working on $g(x)$ and $\phi(x)$ directly, we will work
on their generalizations $g_{c}(x)$ and $\phi_{c}(x)$. Therefore,
we need to study (\ref{eq1.4}) and, in particular, the properties
of $v_{p}(b_{n})$, where $v_{p}$ is the $p$-adic valuation in $\mathbb{C}_{p}$.
The key idea of our proofs is to consider (\ref{eq1.4}) as a \textbf{perturbation}
of (\ref{eq1.3}). First we show that under some conditions on $d$
and $c$, the values of $v_{p}(a_{n})$ can be explicitly obtained.
Then we show that under the same conditions, the perturbation is small
enough so that $v_{p}(b_{n})=v_{p}(a_{n})$, which enables us to determine
the radii of convergence of $\phi_{c}(x)$ and $\phi_{c}^{-1}(x)$.

The conditions mentioned above can be summarized as follows:

\begin{ConditionAB} \label{condA} Assume that $p$ is a prime number,
$N=0$, $d$ is a \textbf{multiple} of $p$, and
\begin{equation}
v_{p}(c)<v_{p}(d)+\frac{v_{p}((d-1)!)}{d-1}.\label{eq1.5}
\end{equation}

\end{ConditionAB}

\begin{ConditionAB} \label{condB} Assume that $p$ is a prime number,
$N\geq1$ is an integer, $d$ is a \textbf{power} of $p$, and
\begin{equation}
Nv_{p}(d)+\frac{v_{p}((d-1)!)}{d-1}<v_{p}(c)<(N+1)v_{p}(d)+\frac{v_{p}((d-1)!)}{d-1}.\label{eq1.6}
\end{equation}

\end{ConditionAB}

Now we are ready to give the main theorems of this article.
\begin{thm}
\label{thm1.2} Let $p$, $N$, $d$, and $c$ satisfy Condition \ref{condA}
or \ref{condB}. Then the maximal convergent open disks of $\varphi_{c}(z)$
and $\varphi_{c}^{-1}(z)$ are both $D(\infty,r_{N}^{1/d})=\{z\in\mathbb{C}_{p}:|z|_{p}>r_{N}^{1/d}\}$,
where
\[
r_{N}=\left(|c/d^{N+1}|_{p}p^{1/(p-1)}\right)^{1/d^{N}}>1.
\]
Moreover, $\varphi_{c}(z)$ gives a bijective isometry from $D(\infty,r_{N}^{1/d})$
onto itself.
\end{thm}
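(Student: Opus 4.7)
The strategy is to extract sharp $p$-adic valuations of the coefficients $a_n$ from the simplified functional equation (\ref{eq1.3}) and translate these into the stated radii of convergence and the isometry property. Writing $A(x) := \sum_{n \geq 1} a_n x^n$, equation (\ref{eq1.3}) becomes $\sum_{k=1}^{d} \binom{d}{k} A(x)^k = cx + A(x^d)$. Extracting the coefficient of $x^n$ produces a recursion in which $d\, a_n$ is isolated on the left together with polynomial expressions in $a_1, \dots, a_{n-1}$, while the right contributes $c \cdot \mathbf{1}_{n=1} + a_{n/d} \cdot \mathbf{1}_{d \mid n}$; in particular $a_1 = c/d$.

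The heart of the argument is a strong induction on $n$ yielding an explicit formula for $v_p(a_n)$ that depends on the base-$p$ (equivalently base-$d$ under Condition \ref{condB}) digits of $n$. Conditions \ref{condA} and \ref{condB} are calibrated so that at each recursive step exactly one contribution on the right-hand side carries the smallest $p$-adic valuation and is balanced only by $d\, a_n$, while every cross product $\binom{d}{k} a_{i_1} \cdots a_{i_k}$ with $k \geq 2$ has strictly larger valuation. I expect this dominance step to be the main obstacle: it requires controlling $v_p(\binom{d}{k})$ via Kummer's theorem, exploiting the quantity $v_p((d-1)!)/(d-1)$ appearing in (\ref{eq1.5}) and (\ref{eq1.6}) --- this is the source of the factor $p^{1/(p-1)}$ in $r_N$, since $v_p((d-1)!)/(d-1) \to 1/(p-1)$ --- and simultaneously tracking $N$, $v_p(c)$, $v_p(d)$, and the digit structure of $n$ to preclude accidental cancellations. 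The two conditions will almost certainly require qualitatively different inductive setups, since in Condition \ref{condA} the recursion is primed by the $cx$ term alone (no lifts $a_{n/d}$ dominate until late), whereas in Condition \ref{condB} the lifts $a_{d^N}, a_{d^{N-1}}, \dots, a_1$ each serve as ``seeds'' of minimal valuation.

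From the resulting valuation formula one reads off $\limsup_{n \to \infty} |a_n|_p^{1/n} = r_N$, so the radius of convergence of the $x$-series is $r_N^{-1}$ and, via the substitution $x = z^{-d}$, the maximal convergent domain of $\varphi_c$ is $D(\infty, r_N^{1/d})$; that $r_N > 1$ is immediate from (\ref{eq1.5}) or (\ref{eq1.6}). The same bounds show that every term $|a_n z^{-nd}|_p$ is strictly less than $1$ on this disk, so $|\varphi_c(z) - z|_p < |z|_p$ and $\varphi_c$ is an isometric self-map of $D(\infty, r_N^{1/d})$. Bijectivity follows from a $p$-adic contraction argument solving $w = \varphi_c(z) = z - (z - \varphi_c(z))$ for $z$ given $w$. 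Finally, the coefficient bounds for $\varphi_c^{-1}(z) = z + \cdots$, obtained either by Lagrange inversion or by running the analogous induction on its own recursion derived from the conjugacy $\varphi_c^{-1}(f_c(z)) = \varphi_c^{-1}(z)^d$, yield the same maximal convergent disk.
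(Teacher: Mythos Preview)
Your overall strategy---compute $v_p(a_n)$ exactly by identifying a dominant term in the recursion, read off the radius, and handle the inverse via Lagrange inversion---matches the paper's. But the specific mechanism you propose does not work as stated, and the paper proceeds differently in the key step.

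Your claim that ``every cross product $\binom{d}{k}a_{i_1}\cdots a_{i_k}$ with $k\ge 2$ has strictly larger valuation'' than $d\,a_n$ is false already for small $n$. For $2\le n<d$ with $d\nmid n$ the right-hand side of your recursion is $0$, so $d\,a_n$ equals minus the sum of cross products and at least one of them has valuation $\le v_p(d\,a_n)$; concretely $d\,a_2=-\binom{d}{2}a_1^2$. More generally, for $n=p<d=p^2$ one checks $v_p\bigl(\binom{d}{p}a_1^{\,p}\bigr)=v_p(d\,a_p)$. So the valuation of $a_n$ is \emph{not} determined by isolating $d\,a_n$ against a single right-hand term; several cross terms share the minimal valuation and you must show they do not cancel. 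The paper avoids this by not using your recursion for the valuation computation: Proposition~\ref{prop3.1} takes the $d$-th root of (\ref{eq1.3}) to write $a_n$ directly as a \emph{sum} over tuples $(n_0,n_1,\dots)$ with $n_0+d\sum k\,n_k=n$ (no division by $d$), and Proposition~\ref{prop3.3} then shows the summand attached to the base-$d$ expansion of $n$ strictly dominates all others via a chain of comparisons $\sigma\to\sigma_0\to\cdots\to\sigma_N\to\sigma_{\mathrm{can}}$. This argument treats Conditions~\ref{condA} and~\ref{condB} \emph{uniformly}, contrary to your expectation of ``qualitatively different inductive setups''; the parameter $N$ simply counts how many steps the chain has.

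For $\varphi_c^{-1}$ you are also missing a step. Lagrange inversion together with the subadditivity-type inequality (Proposition~\ref{prop3.5}) gives only $v_p(a_n')\ge v_p(a_n)$, hence the radius of $\varphi_c^{-1}$ is \emph{at least} $r_N^{1/d}$. To show it is no larger one must exhibit infinitely many $n$ with $v_p(a_n')=v_p(a_n)$; the paper does this for $n$ a power of $p$ by checking that among all decompositions $n=\sum k m_k$ only $m_n=1$ achieves the minimum, via a digit-sum calculation with Lemma~\ref{lem2.2}. Your contraction argument gives bijectivity on $D(\infty,r_N^{1/d})$ but says nothing about divergence of the inverse series beyond that disk.
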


\begin{thm}
\label{thm1.3} Let $p$, $N$, $d$, and $c$ satisfy Condition \ref{condA}
or \ref{condB}. Then the maximal convergent open disks of $\phi_{c}(x)$
and $\phi_{c}^{-1}(x)$ are both $D(0,r_{N}^{-1})=\{x\in\mathbb{C}_{p}:|x|_{p}<r_{N}^{-1}\}$.
Moreover, $\phi_{c}(x)$ gives a bijective isometry from $D(0,r_{N}^{-1})$
onto itself.
\end{thm}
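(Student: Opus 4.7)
The plan is to transport the conclusions of Theorem \ref{thm1.2} from $\varphi_c(z)$ near $z=\infty$ to $\phi_c(x)$ near $x=0$ by comparing the coefficient sequences $\{a_n\}$ and $\{b_n\}$ defined by (\ref{eq1.3}) and (\ref{eq1.4}). The key identity to establish is
\[
v_p(b_n) = v_p(a_n) \quad \text{for every } n \geq 1,
\]
after which both the maximal disks and the isometry assertion for $\phi_c$ will drop out quickly from their counterparts in Theorem \ref{thm1.2}.

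To prove this valuation equality, I would view (\ref{eq1.4}) as a perturbation of (\ref{eq1.3}). Expanding $(1+cx)^{n+1}$ on the right-hand side of (\ref{eq1.4}) and collecting the coefficient of $x^N$, one obtains a recursion for $b_N$ which matches the recursion for $a_N$ coming from (\ref{eq1.3}) plus correction terms, each carrying at least one extra factor of $c$. Under Conditions \ref{condA} or \ref{condB}, the hypotheses (\ref{eq1.5}) and (\ref{eq1.6}) on $v_p(c)$ are calibrated precisely so that every such correction term has strictly larger $p$-adic valuation than the main term, and an induction on $N$ then delivers $v_p(b_N) = v_p(a_N)$. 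This is the step I expect to be the main obstacle: carrying it out requires the explicit values of $v_p(a_n)$ supplied by the proof of Theorem \ref{thm1.2}, together with a careful case split across the two regimes of Conditions \ref{condA} and \ref{condB} and tight control on the inductive hypothesis to prevent any unexpected coincidences of valuations among the correction terms.

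Once $|b_n|_p = |a_n|_p$ is in hand, the remaining conclusions are immediate. By Theorem \ref{thm1.2}, the series $\sum a_n z^{-nd}$ converges precisely for $|z|_p > r_N^{1/d}$, which under the substitution $x = z^{-d}$ translates into convergence of $\sum b_n x^n$ precisely for $|x|_p < r_N^{-1}$; hence the maximal disk of $\phi_c$ is $D(0, r_N^{-1})$. The self-isometry of $\varphi_c$ on $D(\infty, r_N^{1/d})$ forces $|a_n|_p \leq r_N^n$ for every $n \geq 1$, whence $|b_n x^n|_p < 1$ strictly for every $x \in D(0, r_N^{-1})$ and every $n \geq 1$; the ultrametric inequality then yields
\[
|\phi_c(x_1) - \phi_c(x_2)|_p = |x_1 - x_2|_p
\]
for all $x_1, x_2 \in D(0, r_N^{-1})$. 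To complete the statement for $\phi_c^{-1}$ and thereby obtain bijectivity, the formal power series inversion of $\phi_c$ combined with the bound $|b_n|_p \leq r_N^n$ yields the analogous bound on the coefficients of $\phi_c^{-1}(x) = x + \sum \beta_n x^{n+1}$ by a straightforward induction, giving the same maximal disk $D(0, r_N^{-1})$ and the same isometry on it.
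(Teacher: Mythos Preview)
Your plan is essentially the paper's own strategy: Proposition~4.2 there proves $v_p(b_n)=v_p(a_n)$ by the same perturbation-plus-induction scheme, after which the paper simply declares that the proof of Theorem~\ref{thm1.3} runs verbatim as the proof of Theorem~\ref{thm1.2} with $a_n$ replaced by $b_n$. Two places in your sketch need sharpening, however.

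First, the heuristic ``each correction term carries an extra factor of $c$'' is not by itself what makes the perturbation small. When you compare the degree-$n$ coefficients of (\ref{eq1.3}) and (\ref{eq1.4}) and isolate the leading terms $da_n$ and $db_n$, the new contributions from (\ref{eq1.4}) are integer multiples of $b_i c^{\,n-id}$ with $id<n$; since $a_n$ already involves $c^n/d^n n!$ up to the superadditive correction $e(n)$, an extra power of $c$ buys nothing unless you simultaneously use the missing powers of $d^{-1}$ and the factorial. The paper isolates this as a separate inequality (Proposition~4.1): $v_p(a_i c^{\,n-id})>v_p(da_n)$ for $1\le i<n/d$, whose proof goes through the explicit formula of Proposition~3.3 rather than through counting $c$'s. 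Equally important, the induction must track the stronger statement $v_p(a_n-b_n)>v_p(a_n)$, not just $v_p(b_n)=v_p(a_n)$; otherwise you cannot bound $\prod a_k^{m_k}-\prod b_k^{m_k}$ in the multinomial terms of the recursion.

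Second, for $\phi_c^{-1}$ your argument only yields the upper bound $|\beta_n|_p\le r_N^{\,n}$, hence convergence on $D(0,r_N^{-1})$, but not that this disk is \emph{maximal}. The paper handles the analogous point for $\varphi_c^{-1}$ via Lagrange--B\"urmann (Lemma~\ref{lem2.5}) together with Proposition~\ref{prop3.5}, and then checks directly that when $n$ is a power of $p$ the unique surviving summand forces $v_p(a_n')=v_p(a_n)$, so that $\liminf v_p(a_n')/n=\liminf v_p(a_n)/n$. You will need the same refinement for the coefficients of $\phi_c^{-1}$; the straightforward inductive bound you propose does not rule out a strictly larger radius.
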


In Conjecture \ref{conj1.1}, we have $d=p^{2}$ and $c=p^{r+2}$
with $r\in\mathbb{Z}_{\geq0}$, so we can take $N=\left\lfloor (r+1)/2\right\rfloor $
to satisfy Condition \ref{condA} or \ref{condB}. Then by Theorem
\ref{thm1.3}, the radius of convergence of $\phi^{-1}(x)$ is
\[
r_{N}^{-1}=\left(|c/d^{N+1}|_{p}p^{1/(p-1)}\right)^{-1/d^{N}}=p^{-p^{-r}/(p-1)},
\]
as conjectured by Salerno and Silverman.
\begin{cor}
\label{cor1.4} Conjecture \ref{conj1.1} is true.
\end{cor}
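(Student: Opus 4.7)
The plan is to derive Corollary \ref{cor1.4} as an immediate specialization of Theorem \ref{thm1.3}, following the blueprint already sketched just above the statement. With $d = p^2$ and $c = p^{r+2}$, the task reduces to two bookkeeping steps: (i) exhibiting an integer $N$ for which Condition \ref{condA} or Condition \ref{condB} holds, and (ii) checking that the resulting radius $r_N^{-1}$ from Theorem \ref{thm1.3} coincides with the conjectured value $p^{-p^{-r}/(p-1)}$.

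For step (i), I would first compute the critical term $v_p((d-1)!)/(d-1)$. The base-$p$ expansion $p^2 - 1 = (p-1) + (p-1)p$ has digit sum $2(p-1)$, so Legendre's formula gives $v_p((p^2-1)!) = p - 1$, whence
\[
\frac{v_p((d-1)!)}{d-1} \;=\; \frac{p-1}{p^2-1} \;=\; \frac{1}{p+1} \;\in\; (0,1).
\]
Since $v_p(c) = r+2$ and $v_p(d) = 2$, inequality (\ref{eq1.5}) with $N = 0$ holds exactly when $r = 0$, while (\ref{eq1.6}) for $N \geq 1$ holds exactly when $r \in \{2N-1,\, 2N\}$. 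In every case the choice $N = \lfloor (r+1)/2 \rfloor$ activates one of the two conditions; note that $d = p^2$ is simultaneously a multiple and a power of $p$, so neither structural hypothesis on $d$ causes trouble.

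For step (ii), I would simply substitute into the formula of Theorem \ref{thm1.3}. Using $|c/d^{N+1}|_p = p^{2N-r}$ and $d^N = p^{2N}$,
\[
r_N \;=\; \bigl(|c/d^{N+1}|_p\, p^{1/(p-1)}\bigr)^{1/d^N} \;=\; p^{(2N - r + 1/(p-1))/p^{2N}}.
\]
A short arithmetic check on each of the three admissible cases $(r,N) = (0,0)$, $r = 2N-1$ with $N \geq 1$, and $r = 2N$ with $N \geq 1$ shows that the exponent simplifies to $1/((p-1)p^r)$, producing $r_N^{-1} = p^{-p^{-r}/(p-1)}$, exactly as conjectured.

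I do not expect any real obstacle: once Theorem \ref{thm1.3} is in hand, the entire deduction is finite $p$-adic bookkeeping. The only place that demands mild care is handling the two admissible residues of $r$ modulo $2$ uniformly inside Condition \ref{condB}, and the explicit case split above makes this transparent.
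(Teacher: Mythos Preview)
Your proposal is correct and follows exactly the approach the paper uses: it is the same specialization of Theorem~\ref{thm1.3} with the same choice $N=\lfloor (r+1)/2\rfloor$, only with the bookkeeping (the value $v_p((d-1)!)/(d-1)=1/(p+1)$, the case analysis $r=0$ versus $r\in\{2N-1,2N\}$, and the exponent simplification) written out explicitly rather than left implicit.
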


We remark that the technical Conditions \ref{condA} and \ref{condB}
are crucial for Theorems \ref{thm1.2} and \ref{thm1.3}.
\begin{rem}
\label{rem1.5} If $p=d=c=2$, then $f_{2}(z)=z^{2}-2$ is a Chebyshev
map. Now $\varphi_{2}(z)=z+z^{-1}$ and
\[
\varphi_{2}^{-1}(z)=z\left(1-\sum_{n=1}^{\infty}\frac{C_{n}}{z^{2n}}\right),\text{ where }C_{n}=\frac{(2n-2)!}{(n-1)!n!}
\]
are known as the Catalan numbers. Their maximal convergent open disks
are $D(\infty,0)$ and $D(\infty,1)$, respectively. On the other
hand, the maximal convergent open disks of $\varphi_{c}(z)$ and $\varphi_{c}^{-1}(z)$
in Theorem \ref{thm1.2} are always identical.
\end{rem}

\begin{rem}
\label{rem1.6} If $d=p$ and $c$ is a multiple of $p$, then by
\cite[Theorem 4]{SS20}, both $\phi_{c}(x)$ and $\phi_{c}^{-1}(x)$
have integral coefficients so that they are convergent on the open
unit disk $D(0,1)$. On the other hand, $D(0,r_{N}^{-1})$ in Theorem
\ref{thm1.3} is always strictly smaller than $D(0,1)$.
\end{rem}

Theorem \ref{thm1.2} can also be interpreted in a different way.
For any $c\in\mathbb{C}_{p}$, let
\[
B(c)=\{z\in\mathbb{C}_{p}:f_{c}^{\circ n}(z)\to\infty\text{ as }n\to\infty\}
\]
be the \textbf{basin of infinity} of $f_{c}(z)$. We say that $B(c_{1})$
and $B(c_{2})$ are \textbf{analytically conjugate} if there is a
bijective analytic map $\Phi_{c_{1},c_{2}}:B(c_{1})\to B(c_{2})$
whose inverse is also analytic such that
\begin{equation}
f_{c_{2}}(\Phi_{c_{1},c_{2}}(z))=\Phi_{c_{1},c_{2}}(f_{c_{1}}(z)).\label{eq1.7}
\end{equation}
We know that $f_{c}(z)$ has good reduction if and only if
\[
v_{p}(c)\geq0\Leftrightarrow B(c)=D(\infty,1)\Leftrightarrow0\notin B(c),
\]
so Theorem \ref{thm1.2} tells us $\varphi_{c}(z)$ does not give
an analytic conjugacy between $B(0)$ and $B(c)$. Indeed, we are
able to prove the following more general result.
\begin{thm}
\label{thm1.7} Let $p$, $N$, $d$, and $c=c_{2}$ satisfy Condition
\ref{condA} or \ref{condB}. Let $c_{1}$ satisfy $v_{p}(c_{1})\geq0$
and
\begin{equation}
v_{p}(c_{1}^{d-1}-c_{2}^{d-1})=v_{p}(c_{2}^{d-1}).\label{eq1.8}
\end{equation}
Then $B(c_{1})$ and $B(c_{2})$ are not analytically conjugate.
\end{thm}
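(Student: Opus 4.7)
The plan is a proof by contradiction that exploits the maximality of the convergence disk in Theorem~\ref{thm1.2}. Assume an analytic conjugacy $\Phi\colon B(c_1)\to B(c_2)$ exists, so that $f_{c_2}\circ\Phi=\Phi\circ f_{c_1}$. Since $\infty$ is the unique superattracting fixed point of $f_{c_i}$ inside $B(c_i)$, we must have $\Phi(\infty)=\infty$, so $\Phi$ admits a Laurent expansion $\Phi(z)=\lambda z+a_0+a_{-1}z^{-1}+\cdots$ near $\infty$ with $\lambda\neq 0$. Matching coefficients in $\Phi(z)^d-c_2=\Phi(z^d-c_1)$, the $z^d$ term gives $\lambda^d=\lambda$, hence $\lambda^{d-1}=1$ and $|\lambda|_p=1$, and then the $z^{d-1},z^{d-2},\ldots,z^1$ terms inductively force $a_0=a_{-1}=\cdots=a_{-(d-2)}=0$.

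Next I form $\Psi(z):=\varphi_{c_2}^{-1}(\Phi(\varphi_{c_1}(z)))$ on a neighborhood of $\infty$. The B\"ottcher equations $f_{c_i}(\varphi_{c_i}(z))=\varphi_{c_i}(z^d)$ combined with the conjugacy property of $\Phi$ give $\Psi(z^d)=\Psi(z)^d$, while the data above and $\varphi_{c_i}(z)=z+O(z^{1-d})$ yield $\Psi(z)=\lambda z+O(z^{1-d})$. Writing $\Psi(z)=\lambda z+h(z)$, the identity $h(z^d)=d\lambda^{d-1}z^{d-1}h(z)+O(h^2)$ cannot be satisfied by any nonzero leading term of $h$: the leading monomial on the right side is of strictly higher $z$-degree than the leading monomial of $h(z^d)$ on the left, so its coefficient must vanish, and iterating gives $h\equiv 0$. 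Hence $\Psi(z)=\lambda z$ identically, which rewrites as the key identity
\[
\varphi_{c_2}(\lambda z)=\Phi(\varphi_{c_1}(z)).
\]

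Let $D(\infty,R_1)$ denote the maximal convergence disk of $\varphi_{c_1}$. Because $|\lambda|_p=1$ and $\varphi_{c_1}$ maps $D(\infty,R_1)$ into itself and hence into $B(c_1)$, the formula $\widetilde\varphi(w):=\Phi(\varphi_{c_1}(w/\lambda))$ defines an analytic function on $D(\infty,R_1)$. A direct calculation shows $\widetilde\varphi$ satisfies the B\"ottcher equation for $f_{c_2}$ and has the normalization $\widetilde\varphi(w)=w+O(w^{1-d})$ at $\infty$; the uniqueness argument above, applied to $\widetilde\varphi-\varphi_{c_2}$, then forces $\widetilde\varphi\equiv\varphi_{c_2}$ on their common disk. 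Since a rigid analytic function on an open $p$-adic disk is represented by a uniquely convergent Laurent series there, the power series defining $\varphi_{c_2}$ in fact converges on all of $D(\infty,R_1)$.

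To derive the desired contradiction with Theorem~\ref{thm1.2} it remains to verify the strict inequality $R_1<r_N^{1/d}$, and this is precisely where hypotheses $v_p(c_1)\geq 0$ and~(\ref{eq1.8}) become essential. When $v_p(c_1)>v_p(c_2)$, $c_1$ satisfies Condition~\ref{condA} or~\ref{condB} for an appropriate parameter $N'$, and the explicit formula for $r_{N'}(c_1)^{1/d}$ from Theorem~\ref{thm1.2} yields the strict inequality by direct comparison with $r_N(c_2)^{1/d}$. In the borderline case $v_p(c_1)=v_p(c_2)$, the additional content of~(\ref{eq1.8})---that $(c_1/c_2)^{d-1}$ is not congruent to $1$ in the residue field---must be exploited through a finer analysis of the $p$-adic valuations of the coefficients $a_n^{(c_1)}$ in the expansion of $\varphi_{c_1}$, producing the required strict improvement of the convergence radius estimate. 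This radius comparison under~(\ref{eq1.8}) is expected to be the main technical obstacle.
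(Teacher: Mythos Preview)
Your reduction to the identity $\varphi_{c_2}(\lambda z)=\Phi(\varphi_{c_1}(z))$ is correct and elegant, but the final step---the strict radius comparison $R_1<r_N^{1/d}$---cannot be completed as you outline, and this is a genuine gap rather than a technicality. Consider the case $v_p(c_1)=v_p(c_2)$, which is allowed by the hypotheses (take $c_1=u\,c_2$ for a unit $u\in\mathbb{C}_p$ whose residue class satisfies $\bar u^{\,d-1}\neq 1$ in $\overline{\mathbb{F}_p}$; then (\ref{eq1.8}) holds). Since Conditions~\ref{condA} and~\ref{condB} depend only on $v_p(c)$, the parameter $c_1$ satisfies the \emph{same} condition with the \emph{same} $N$, and Theorem~\ref{thm1.2} gives $R_1=r_N(c_1)^{1/d}=r_N(c_2)^{1/d}$ exactly. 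Your identity then only reproduces the convergence of $\varphi_{c_2}$ on $D(\infty,r_N^{1/d})$, which is what Theorem~\ref{thm1.2} already says---no contradiction. No ``finer analysis'' of the coefficients of $\varphi_{c_1}$ can help, because by Proposition~\ref{prop3.3} the valuations $v_p(a_n^{(c_1)})$ depend only on $v_p(c_1)$, hence coincide with $v_p(a_n^{(c_2)})$.

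The paper sidesteps this obstruction by analyzing the convergence of $\Phi$ itself rather than comparing $\varphi_{c_1}$ and $\varphi_{c_2}$. Writing $\Phi(z)=\omega z\bigl(1+\sum t_nz^{-nd}\bigr)$ with $\omega^{d-1}=1$, the conjugacy equation becomes a perturbation of (\ref{eq1.3}) with effective parameter $c=\omega^{-1}c_2-c_1$. This is exactly where (\ref{eq1.8}) enters: factoring $c_1^{d-1}-c_2^{d-1}=\prod_{\omega}(c_1-\omega^{-1}c_2)$, condition (\ref{eq1.8}) forces $v_p(\omega^{-1}c_2-c_1)=v_p(c_2)$ for \emph{every} $\omega$. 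Hence $c=\omega^{-1}c_2-c_1$ satisfies the same Condition~\ref{condA} or~\ref{condB} as $c_2$, the perturbation argument of Proposition~\ref{prop4.2} carries over verbatim to give $v_p(t_n)=v_p(a_n)$, and $\Phi$ has maximal convergence disk $D(\infty,r_N^{1/d})\subsetneq D(\infty,1)=B(c_1)$. So $\Phi$ cannot be analytic on all of $B(c_1)$. The missing idea in your approach is that the relevant ``$c$'' is not $c_1$ or $c_2$ separately but the combination $\omega^{-1}c_2-c_1$ appearing in the recursion for the coefficients of $\Phi$.
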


We remark that Theorem \ref{thm1.7} is inspired by the work of DeMarco
and Pilgrim \cite{DP11}, although in this article we only consider
the most basic cases. A discussion for the analytic conjugacy between
the basins of infinity of two tame polynomials can be found in \cite{KN22}.

The structure of this article is as follows: In Section \ref{sec2},
we prove some preliminary lemmas which will be needed later. In Sections
\ref{sec3}, \ref{sec4}, and \ref{sec5}, we prove Theorems \ref{thm1.2},
\ref{thm1.3}, and \ref{thm1.7}, respectively.

\textbf{Acknowledgments.} The second named author would like to thank
Prof. Julie Tzu-Yueh Wang and Institute of Mathematics, Academia Sinica
for their hospitality during his visit in 2021. We would also like
to thank Prof. Joseph H. Silverman for his comments on an earlier
draft of this article.

\section{\label{sec2} Some preliminary lemmas}

In this section, we prove some preliminary lemmas which will be needed
later.
\begin{lem}
\label{lem2.1} We have $(d-1)!^{kn_{k}}(dk!)^{n_{k}}n_{k}!$ divides
$(dkn_{k})!$ for any $d,k\geq1$ and $n_{k}\geq0$.
\end{lem}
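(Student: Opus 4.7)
The plan is to prove this divisibility via a standard combinatorial counting argument. The key identity I would invoke is that
\[
M(d,k,n_k) := \frac{(dkn_k)!}{(d!)^{kn_k}\,(k!)^{n_k}\,n_k!}
\]
is a nonnegative integer, being the number of ways to partition a set of $dkn_k$ labeled objects into $n_k$ unordered super-blocks of size $dk$, each in turn partitioned into $k$ unordered sub-blocks of size $d$. Indeed, one obtains $M(d,k,n_k)$ from the multinomial coefficient $(dkn_k)!/(d!)^{kn_k}$ (which counts ordered lists of $kn_k$ ordered blocks of size $d$) by quotienting by $(k!)^{n_k}$ to unorder the $k$ sub-blocks inside each super-block, and then by $n_k!$ to unorder the $n_k$ super-blocks themselves. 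Consequently,
\[
(d!)^{kn_k}\,(k!)^{n_k}\,n_k!\ \text{divides}\ (dkn_k)!.
\]

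From here the lemma follows by a trivial manipulation. Writing $d!=d\cdot(d-1)!$, the displayed divisibility becomes
\[
d^{kn_k}\,(d-1)!^{kn_k}\,(k!)^{n_k}\,n_k!\ \text{divides}\ (dkn_k)!.
\]
Since $k\geq 1$, we have $d^{kn_k}=d^{n_k}\cdot d^{(k-1)n_k}$, so dropping the surplus factor $d^{(k-1)n_k}$ still leaves a divisor of $(dkn_k)!$, namely
\[
d^{n_k}\,(d-1)!^{kn_k}\,(k!)^{n_k}\,n_k!\ =\ (d-1)!^{kn_k}\,(d\cdot k!)^{n_k}\,n_k!,
\]
which is exactly the statement of the lemma.

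The whole argument is elementary bookkeeping once one spots the correct double-partition interpretation, and I do not foresee any real obstacle; the substantive content is just that taking an ordered multinomial count and quotienting successively by the automorphism groups of the sub-blocks and super-blocks still produces an integer. A purely arithmetic alternative based on Legendre's formula $v_p(m!)=(m-s_p(m))/(p-1)$ prime by prime is available but much more tedious, so I would avoid it.
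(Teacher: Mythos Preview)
Your proof is correct. The paper takes a slightly different but equally elementary route: it rewrites the left-hand side as $\prod_{i=1}^{n_k}(idk)(d-1)!^{k}(k-1)!$ and $(dkn_k)!$ as $\prod_{i=1}^{n_k}\bigl((idk)\prod_{j=(i-1)dk+1}^{idk-1}j\bigr)$, and then checks the divisibility term by term over $i$ (each block of $dk-1$ consecutive integers being divisible by $(d-1)!^{k}(k-1)!$). You instead first establish the \emph{stronger} divisibility $(d!)^{kn_k}(k!)^{n_k}n_k!\mid(dkn_k)!$ via a transparent combinatorial count of nested set partitions, and then discard the surplus factor $d^{(k-1)n_k}$. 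The paper's argument is tailored exactly to the stated expression, while yours exposes the underlying iterated multinomial identity and then specializes; both are essentially one line once the right decomposition is spotted.
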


\begin{proof}
We have
\[
(d-1)!^{kn_{k}}(dk!)^{n_{k}}n_{k}!=\prod_{i=1}^{n_{k}}(idk)(d-1)!^{k}(k-1)!
\]
divides
\[
\prod_{i=1}^{n_{k}}\left((idk)\prod_{j=(i-1)dk+1}^{idk-1}j\right)=(dkn_{k})!.\qedhere
\]
\end{proof}
\begin{lem}
[Legendre] \label{lem2.2} Let $s_{p}(n)$ be the sum of the digits
in the base-$p$ expansion of $n$. Then
\[
v_{p}(n!)=\frac{n-s_{p}(n)}{p-1}.
\]
\end{lem}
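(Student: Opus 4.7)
The plan is to prove this classical identity of Legendre via the standard two-step route: first establish the ``floor-sum'' form
\[
v_p(n!) = \sum_{i=1}^{\infty}\left\lfloor \frac{n}{p^{i}}\right\rfloor,
\]
and then collapse this sum using the base-$p$ expansion of $n$. For the first step, I would count contributions by noting that among $1,2,\ldots,n$ the number of multiples of $p^{i}$ is exactly $\lfloor n/p^{i}\rfloor$, and each integer $m \le n$ contributes $v_p(m)$ to $v_p(n!)$ exactly by being counted once for every $i$ with $p^{i}\mid m$; summing over $i$ gives the displayed identity. The sum is finite since $\lfloor n/p^{i}\rfloor=0$ once $p^{i}>n$.

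For the second step, I would write $n=\sum_{j=0}^{k} a_{j}p^{j}$ with digits $0\le a_{j}\le p-1$, so that $s_{p}(n)=\sum_{j=0}^{k} a_{j}$. Then
\[
\left\lfloor \frac{n}{p^{i}}\right\rfloor = \sum_{j=i}^{k} a_{j}p^{j-i},
\]
and summing over $i=1,\ldots,k$ and exchanging the order of summation gives
\[
\sum_{i=1}^{k}\left\lfloor \frac{n}{p^{i}}\right\rfloor = \sum_{j=1}^{k} a_{j}\sum_{i=1}^{j} p^{j-i} = \sum_{j=0}^{k} a_{j}\cdot\frac{p^{j}-1}{p-1} = \frac{n-s_{p}(n)}{p-1},
\]
which is the desired formula.

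There is no real obstacle here: both steps are elementary and classical. The only thing requiring any care is the interchange of summation in the second step and the correct handling of the $j=0$ term (which vanishes anyway since $p^{0}-1=0$, so it can be appended freely to rewrite the sum in its final form). Since the lemma is a standard tool invoked later to evaluate $v_p((d-1)!)$ and similar factorial valuations, a short self-contained proof along these lines suffices.
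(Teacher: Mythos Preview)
Your proof is correct and is the standard argument for Legendre's formula. Note, however, that the paper does not actually supply a proof of this lemma: it is simply stated as a classical result attributed to Legendre and used as a black box in the subsequent lemmas and propositions. So there is nothing to compare against; your write-up would serve perfectly well as a self-contained justification should one be desired.
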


\begin{lem}
\label{lem2.3} Let $p$ be a prime number and let $d$ be a power
of $p$. If $n_{0}+n_{1}d=m_{0}+m_{1}d$ for some $0\leq n_{0}<d$
and $n_{1},m_{0},m_{1}\geq0$, then
\[
v_{p}\left(\frac{m_{0}!m_{1}!}{n_{0}!n_{1}!}\right)\leq(n_{1}-m_{1})v_{p}(d!).
\]
\end{lem}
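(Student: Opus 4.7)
The plan is to reduce the claimed valuation inequality to a combinatorial statement about digit sums in base $p$, using Legendre's formula (Lemma \ref{lem2.2}) as the bridge.

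First I would set $k = n_1 - m_1$ and observe that the constraint $n_0 + n_1 d = m_0 + m_1 d$ forces $m_0 - n_0 = kd$. Since $0 \leq n_0 < d$ and $m_0 \geq 0$, the case $k < 0$ would require $n_0 - m_0 \geq d$, which is impossible; so $k \geq 0$, $m_0 = n_0 + kd$, and $m_1 = n_1 - k$. In particular, $m_0 + m_1 - n_0 - n_1 = k(d-1)$.

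Next, since $d$ is a power of $p$, we have $v_p(d!) = (d-1)/(p-1)$. Applying Legendre's formula to each factorial in $v_p(m_0! m_1!/(n_0! n_1!))$ and substituting the identity from the previous step gives
\[
v_p\!\left(\frac{m_0!\,m_1!}{n_0!\,n_1!}\right) = \frac{k(d-1) + s_p(n_0) + s_p(n_1) - s_p(m_0) - s_p(m_1)}{p-1}.
\]
Thus the desired bound $v_p(m_0! m_1!/(n_0! n_1!)) \leq k \cdot v_p(d!) = k(d-1)/(p-1)$ is equivalent to the digit-sum inequality
\[
s_p(n_0) + s_p(n_1) \leq s_p(m_0) + s_p(m_1).
\]

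Finally, I would prove this inequality directly. Because $d = p^e$ for some $e \geq 0$ and $0 \leq n_0 < d$, the base-$p$ digits of $n_0$ occupy positions $0,\dots,e-1$ while those of $n_1 d$ occupy positions $\geq e$, so their addition involves no carries: $s_p(n_0 + n_1 d) = s_p(n_0) + s_p(n_1)$. On the other hand, for arbitrary nonnegative integers $a, b$ we have the general subadditivity $s_p(a+b) \leq s_p(a) + s_p(b)$ (each base-$p$ carry decreases the digit sum by exactly $p-1$). Applying this with $a = m_0$ and $b = m_1 d$ yields $s_p(m_0 + m_1 d) \leq s_p(m_0) + s_p(m_1)$. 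Combining these two observations with the identity $n_0 + n_1 d = m_0 + m_1 d$ gives exactly the required inequality. I do not anticipate a serious obstacle; the only subtle point is verifying $k \geq 0$ from the constraint $n_0 < d$, after which everything reduces to a clean application of Legendre's formula and digit-sum subadditivity.
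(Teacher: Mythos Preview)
Your proof is correct and follows essentially the same route as the paper's: both apply Legendre's formula to reduce the inequality to the digit-sum statement $s_p(n_0)+s_p(n_1)\le s_p(m_0)+s_p(m_1)$, and both finish via subadditivity of $s_p$. The only cosmetic difference is that the paper manipulates the differences $s_p(m_0)-s_p(n_0)=s_p(m_0-n_0)=s_p(n_1-m_1)$ and then applies $s_p(n_1-m_1)+s_p(m_1)\ge s_p(n_1)$, whereas you compare both sides directly to the common value $s_p(n_0+n_1d)=s_p(m_0+m_1d)$; your explicit verification that $k=n_1-m_1\ge 0$ is a nice addition that the paper leaves implicit.
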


\begin{proof}
By Lemma \ref{lem2.2}, we have
\[
\text{LHS}=\frac{(m_{0}-n_{0}+m_{1}-n_{1})-(s_{p}(m_{0})-s_{p}(n_{0})+s_{p}(m_{1})-s_{p}(n_{1}))}{p-1}
\]
and
\[
\text{RHS}=\frac{(n_{1}-m_{1})(d-1)}{p-1}=\frac{m_{0}-n_{0}+m_{1}-n_{1}}{p-1}.
\]
Since $d$ is a power of $p$, the base-$p$ and base-$d$ expansions
are compatible. Hence,
\begin{align*}
(p-1)(\text{RHS}-\text{LHS}) & =s_{p}(m_{0})-s_{p}(n_{0})+s_{p}(m_{1})-s_{p}(n_{1})\\
 & =s_{p}(m_{0}-n_{0})+s_{p}(m_{1})-s_{p}(n_{1})\\
 & =s_{p}((n_{1}-m_{1})d)+s_{p}(m_{1})-s_{p}(n_{1})\\
 & =s_{p}(n_{1}-m_{1})+s_{p}(m_{1})-s_{p}(n_{1})\geq0.\qedhere
\end{align*}
\end{proof}
\begin{lem}
\label{lem2.4} Let $p$ be a prime number, let $d$ be a power of
$p$, and let $N$ be a non-negative integer. If $n\geq1$ can be
decomposed as
\[
n=\sum_{k=0}^{N}n_{k}d^{k}\text{ with }0\leq n_{k}<d\text{ for any }0\leq k<N\text{ and }n_{N}\geq0,
\]
then
\[
v_{p}(n!)=\sum_{k=0}^{N}v_{p}(d^{k}!^{n_{k}}n_{k}!).
\]
\end{lem}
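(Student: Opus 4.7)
The plan is to apply Legendre's formula (Lemma \ref{lem2.2}) to both sides of the identity and reduce the problem to an elementary statement about base-$p$ digit sums. Since $d$ is a power of $p$, write $d=p^e$ so that $d^k=p^{ek}$ for every $k$.

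First I would compute the right-hand side. For each $k$, using Lemma \ref{lem2.2},
\[
v_p\bigl((d^k!)^{n_k} n_k!\bigr) = n_k\cdot\frac{d^k-1}{p-1} + \frac{n_k - s_p(n_k)}{p-1} = \frac{n_k d^k - s_p(n_k)}{p-1}.
\]
Summing over $k=0,\dots,N$ gives $\sum_{k=0}^{N} v_p\bigl((d^k!)^{n_k} n_k!\bigr) = \frac{n - \sum_{k=0}^{N} s_p(n_k)}{p-1}$, and Lemma \ref{lem2.2} applied to $v_p(n!)$ gives $\frac{n - s_p(n)}{p-1}$. So the entire lemma reduces to the identity
\[
s_p(n) = \sum_{k=0}^{N} s_p(n_k).
\]

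To verify this, I would note that multiplication by $d^k = p^{ek}$ is simply a left shift by $ek$ digits in base $p$, so $s_p(n_k d^k) = s_p(n_k)$ for each $k$. Moreover, for $0\le k<N$ the hypothesis $0\le n_k<d=p^e$ means the base-$p$ digits of $n_k$ occupy only positions $0,1,\dots,e-1$; after shifting, the digits of $n_k d^k$ sit in the disjoint window $[ek,(k+1)e)$. For $k=N$ there is no size restriction on $n_N$, but its digits begin at position $eN$ and therefore lie above all positions used by the earlier blocks. Because the windows are pairwise disjoint, adding the blocks produces no carries, and the base-$p$ digits of $n=\sum_k n_k d^k$ are just the concatenation of those of $n_0,n_1,\dots,n_N$. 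Hence $s_p(n) = \sum_{k=0}^N s_p(n_k d^k) = \sum_{k=0}^N s_p(n_k)$, completing the proof.

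The only subtle point, and hence the main thing I would make sure to address carefully, is the role of the last digit $n_N$, which is allowed to be $\geq d$. This is harmless because $n_N$'s digits still occupy positions strictly above those of all preceding blocks, so the no-carry argument still applies; the formula $s_p(n_N d^N) = s_p(n_N)$ holds regardless of the size of $n_N$. The rest of the argument is a direct bookkeeping exercise using Legendre.
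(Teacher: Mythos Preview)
Your proof is correct and follows essentially the same approach as the paper: both apply Legendre's formula to each side and reduce to the digit-sum identity $s_p(n)=\sum_{k=0}^{N}s_p(n_k)$, which holds because the base-$p$ digits of the summands $n_kd^k$ occupy disjoint windows. The paper phrases this last step as ``the base-$p$ and base-$d$ expansions are compatible,'' while you spell out the no-carry argument explicitly, including the point about $n_N$ possibly being $\geq d$.
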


\begin{proof}
Since $d$ is a power of $p$, the base-$p$ and base-$d$ expansions
are compatible. Hence, by Lemma \ref{lem2.2}, we have
\[
v_{p}(n!)=\frac{n-s_{p}(n)}{p-1}=\sum_{k=0}^{N}\frac{n_{k}d^{k}-s_{p}(n_{k}d^{k})}{p-1}
\]
and
\[
\sum_{k=0}^{N}v_{p}(d^{k}!^{n_{k}}n_{k}!)=\sum_{k=0}^{N}\frac{n_{k}(d^{k}-1)+n_{k}-s_{p}(n_{k})}{p-1}=\sum_{k=0}^{N}\frac{n_{k}d^{k}-s_{p}(n_{k}d^{k})}{p-1},
\]
which are equal.
\end{proof}
\begin{lem}
\label{lem2.5} Let $d\in\mathbb{Z}\backslash\{0\}$ and let
\[
F(z)=z\left(1+\sum_{n=1}^{\infty}\frac{\alpha_{n}}{z^{nd}}\right)
\]
be a formal power series. Then
\[
F^{-1}(z)=z\left(1+\sum_{n=1}^{\infty}\frac{\beta_{n}}{z^{nd}}\right),
\]
where
\[
\beta_{n}=-\frac{1}{nd-1}\sum_{\sum_{k=1}^{n}km_{k}=n}\left({nd-1 \choose \sum_{k=1}^{n}m_{k}}{\sum_{k=1}^{n}m_{k} \choose m_{1},\dots,m_{n}}\prod_{k=1}^{n}\alpha_{k}^{m_{k}}\right).
\]
\end{lem}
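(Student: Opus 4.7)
My plan is to prove the formula via formal residue calculus, in the spirit of Lagrange--B\"urmann. First, I observe that $\beta_n$ is precisely the coefficient of $z^{1-nd}$ in $F^{-1}(z)$, which equals $[z^{-1}]\bigl(F^{-1}(z)\,z^{nd-2}\bigr)$. Working in the field of formal Laurent series at infinity (Laurent series with only finitely many positive powers), the assumption $F(z)=z+O(z^{1-d})$ lets me apply the formal change-of-variables identity $[z^{-1}]H(F(z))F'(z)=[w^{-1}]H(w)$, valid for any Laurent series $H$. This identity reduces by linearity to the case $H(w)=w^k$: for $k\neq-1$, $F^k F'=\frac{1}{k+1}(F^{k+1})'$ has no $z^{-1}$ term, while for $k=-1$, writing $F=z(1+u)$ with $u=O(z^{-1})$ gives $F'/F=1/z+u'/(1+u)=1/z+O(z^{-2})$, so $[z^{-1}]F^{-1}F'=1$. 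Substituting $H(w)=F^{-1}(w)w^{nd-2}$ yields
\[
\beta_n=[z^{-1}]\bigl(z\,F(z)^{nd-2}F'(z)\bigr).
\]

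Next, I would rewrite $F^{nd-2}F'=\tfrac{1}{nd-1}(F^{nd-1})'$ and invoke the elementary fact $[z^{-1}]z\,g'(z)=-[z^{-1}]g(z)$ (read off term-by-term after differentiating $zg$) to obtain the clean formula
\[
\beta_n=-\frac{1}{nd-1}\,[z^{-1}]F(z)^{nd-1}.
\]

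The remainder is just multinomial bookkeeping. Expanding
\[
F(z)^{nd-1}=z^{nd-1}\Bigl(1+\sum_{k\geq1}\alpha_k z^{-kd}\Bigr)^{nd-1}
\]
by the multinomial theorem, extraction of $[z^{-1}]$ forces $\sum_{k\geq1}km_k=n$ (which automatically makes $m_k=0$ for $k>n$) together with $m_0=nd-1-\sum_{k\geq1}m_k$. Factoring the multinomial coefficient as $\binom{nd-1}{m_0,m_1,\ldots,m_n}=\binom{nd-1}{\sum m_k}\binom{\sum m_k}{m_1,\ldots,m_n}$ reproduces exactly the stated formula.

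The only subtle point is setting up the formal residue framework (the field of Laurent series at infinity and the change-of-variables lemma); once that is in place, every step is routine algebra. As an alternative that avoids residues entirely, one could substitute $s=1/z$, $t=1/w$ so that $w=F(z)$ becomes $s=t(1+\sum\alpha_k s^{kd})$, apply classical Lagrange inversion to get $[t^{nd+1}]s=\tfrac{1}{nd+1}[s^{nd}](1+\sum\alpha_k s^{kd})^{nd+1}$, and then translate the $t$-coefficients of $s=t/(1+\sum\beta_n t^{nd})$ back to the $\beta_n$; the residue approach above simply packages this translation more cleanly.
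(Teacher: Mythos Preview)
Your proof is correct and follows essentially the same route as the paper: both arrive at the identity $\beta_n=-\frac{1}{nd-1}[z^{-nd}]\bigl(F(z)/z\bigr)^{nd-1}$ and then expand by the multinomial theorem. The paper simply cites the Lagrange--B\"urmann formula to obtain this identity, whereas you derive it from scratch via the formal residue change-of-variables; your argument is a self-contained proof of exactly the instance of Lagrange--B\"urmann that the paper invokes.
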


\begin{proof}
Let $[z^{n}]F^{-1}(z)$ be the coefficient of $z^{n}$ in $F^{-1}(z)$.
By the Lagrange\textendash B\"{u}rmann Formula,
\begin{align*}
\beta_{n}=[z^{-nd+1}]F^{-1}(z) & =\frac{1}{-nd+1}[z^{-nd}]\left(\frac{z}{F(z)}\right)^{-nd+1}\\
 & =-\frac{1}{nd-1}[z^{-nd}]\left(1+\sum_{k=1}^{\infty}\frac{\alpha_{k}}{z^{kd}}\right)^{nd-1}.
\end{align*}
Then we expand this power series to get the result.
\end{proof}

\section{\label{sec3} Proof of Theorem \ref{thm1.2}}

In this section, we focus on the properties of $a_{n}$ and give the
proof of Theorem \ref{thm1.2}. First we show that we can compute
$a_{n}$ inductively from (\ref{eq1.3}).
\begin{prop}
\label{prop3.1} The sequence $a_{n}$ satisfies the following inductive
relations:
\begin{enumerate}
\item For any $1\leq n<d$, we have $a_{n}={1/d \choose n}c^{n}$.
\item For any $d^{i}\leq n<d^{i+1}$ with $i\geq1$, we have
\[
a_{n}=\sum_{n_{0}+d\sum_{k=1}^{d^{i}-1}kn_{k}=n}\alpha(n_{0},n_{1},\dots,n_{d^{i}-1}),
\]
where the summation is taken over all non-negative $d^{i}$-tuples
$(n_{0},n_{1},\dots,n_{d^{i}-1})$ such that
\begin{equation}
n_{0}+d\sum_{k=1}^{d^{i}-1}kn_{k}=n\label{eq3.1}
\end{equation}
and
\[
\alpha(n_{0},n_{1},\dots,n_{d^{i}-1})=\frac{c^{n_{0}}}{d^{n_{0}}n_{0}!}\prod_{k=1}^{d^{i}-1}\frac{a_{k}^{n_{k}}}{d^{n_{k}}n_{k}!}\prod_{j=0}^{\sum_{k=0}^{d^{i}-1}n_{k}-1}(1-jd).
\]
\end{enumerate}
\end{prop}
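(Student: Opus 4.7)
The natural approach is to extract the (unique) $d$-th root of both sides of (\ref{eq1.3}), turning the implicit identity into an explicit series expansion. Setting $y=cx+\sum_{k=1}^{\infty}a_{k}x^{kd}$, equation (\ref{eq1.3}) becomes
\[
1+\sum_{n=1}^{\infty}a_{n}x^{n}=(1+y)^{1/d}=\sum_{m=0}^{\infty}\binom{1/d}{m}y^{m},
\]
where $(1+y)^{1/d}$ is well defined as a formal power series over $K$ since $d$ is invertible. I would then expand each $y^{m}$ by the multinomial theorem and read off the coefficient of $x^{n}$ on both sides.

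The clean algebraic step is the identity
\[
\binom{1/d}{m}\binom{m}{n_{0},n_{1},\dots}=\frac{1}{d^{m}\,n_{0}!\,n_{1}!\cdots}\prod_{j=0}^{m-1}(1-jd),\qquad m=\sum_{k\geq 0}n_{k},
\]
obtained from $\binom{1/d}{m}=\frac{1}{m!\,d^{m}}\prod_{j=0}^{m-1}(1-jd)$. Matching powers of $x$ then produces the constraint $n_{0}+d\sum_{k\geq 1}kn_{k}=n$, and the corresponding summand is precisely the quantity $\alpha(n_{0},n_{1},\dots)$ appearing in the statement.

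For (1), when $1\le n<d$, the constraint $n_{0}+d\sum_{k\geq 1}kn_{k}=n$ forces $n_{k}=0$ for all $k\geq 1$, leaving only $n_{0}=n$ and yielding $a_{n}=\binom{1/d}{n}c^{n}$. For (2), when $d^{i}\leq n<d^{i+1}$, any nonzero $n_{k}$ with $k\geq d^{i}$ would already contribute at least $d\cdot d^{i}=d^{i+1}>n$ to the sum, so all such $n_{k}$ vanish and the sum truncates to the $d^{i}$-tuples $(n_{0},\dots,n_{d^{i}-1})$ satisfying (\ref{eq3.1}), as stated. The same estimate shows that any nonzero $n_{k}$ has $k\leq n/d<n$, so only previously computed coefficients appear on the right and the recurrence is well posed.

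The argument is essentially bookkeeping once the $d$-th root has been taken; the only care needed is tracking the truncation range $k\leq d^{i}-1$, and I do not anticipate a conceptual obstacle beyond keeping the combinatorial identities straight.
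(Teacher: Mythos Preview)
Your proposal is correct and follows essentially the same approach as the paper: both take the formal $d$-th root of the right-hand side of \eqref{eq1.3} via the binomial series $(1+y)^{1/d}=\sum_{m}\binom{1/d}{m}y^{m}$, expand with the multinomial theorem, and rewrite $\binom{1/d}{m}$ as $\frac{1}{m!\,d^{m}}\prod_{j=0}^{m-1}(1-jd)$. The only organizational difference is that the paper first truncates the right-hand side to $1+cx+\sum_{k=1}^{d^{i}-1}a_{k}x^{kd}$ and then checks (via the factorization $A^{d}-B^{d}=(A-B)\sum A^{i}B^{d-1-i}$) that this does not affect the coefficients with $n<d^{i+1}$, whereas you expand the full series and observe directly that the degree constraint $n_{0}+d\sum k n_{k}=n<d^{i+1}$ forces $n_{k}=0$ for $k\geq d^{i}$; your route is slightly more direct but mathematically identical.
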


\begin{proof}
Let
\begin{equation}
\left(1+\sum_{n=1}^{\infty}a_{n}'x^{n}\right)^{d}=1+cx,\label{eq3.2}
\end{equation}
then
\[
1+\sum_{n=1}^{\infty}a_{n}'x^{n}=(1+cx)^{1/d}=1+\sum_{n=1}^{\infty}{1/d \choose n}c^{n}x^{n}
\]
and $a_{n}'={1/d \choose n}c^{n}\text{ for any }n\geq1$. Considering
the difference of (\ref{eq1.3}) and (\ref{eq3.2}), we get
\[
\left(\sum_{n=1}^{\infty}(a_{n}-a_{n}')x^{n}\right)\left(\sum_{i=0}^{d-1}\left(1+\sum_{n=1}^{\infty}a_{n}x^{n}\right)^{i}\left(1+\sum_{n=1}^{\infty}a_{n}'x^{n}\right)^{d-1-i}\right)=\sum_{n=1}^{\infty}a_{n}x^{nd}.
\]
Comparing the degrees on both sides, we get $a_{n}=a_{n}'={1/d \choose n}c^{n}$
for any $1\leq n<d$. Moreover, let
\[
\left(1+\sum_{n=1}^{\infty}a_{n}''x^{n}\right)^{d}=1+cx+\sum_{n=1}^{d^{i}-1}a_{n}x^{nd},
\]
then
\[
1+\sum_{n=1}^{\infty}a_{n}''x^{n}=1+\sum_{j=1}^{\infty}{1/d \choose j}\left(cx+\sum_{n=1}^{d^{i}-1}a_{n}x^{nd}\right)^{j}.
\]
By the same reasoning as above, for any $d^{i}\leq n<d^{i+1}$, we
have
\begin{align*}
a_{n}=a_{n}'' & =\sum_{n_{0}+d\sum_{k=1}^{d^{i}-1}kn_{k}=n}{1/d \choose \sum_{k=0}^{d^{i}-1}n_{k}}{\sum_{k=0}^{d^{i}-1}n_{k} \choose n_{0},n_{1},\dots,n_{d^{i}-1}}c^{n_{0}}\prod_{k=1}^{d^{i}-1}a_{k}^{n_{k}}\\
 & =\sum_{n_{0}+d\sum_{k=1}^{d^{i}-1}kn_{k}=n}\left(\frac{c^{n_{0}}}{d^{n_{0}}n_{0}!}\prod_{k=1}^{d^{i}-1}\frac{a_{k}^{n_{k}}}{d^{n_{k}}n_{k}!}\prod_{j=0}^{\sum_{k=0}^{d^{i}-1}n_{k}-1}(1-jd)\right).\qedhere
\end{align*}

An immediate corollary of Proposition \ref{prop3.1} is that $a_{n}$
can be considered as a polynomial of degree $n$ in $c$. This corollary,
however, will not be used in the sequel. More results of this type
can be found in \cite[Section 2.4.1]{FG22}.
\end{proof}
\begin{cor}
\label{cor3.2} For any $n\geq1$, we have $a_{n}\in\frac{1}{n!}\mathbb{Z}[c/d]$
with the leading term ${1/d \choose n}c^{n}$.
\end{cor}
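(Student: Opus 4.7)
The plan is strong induction on $n$, applying the two parts of Proposition \ref{prop3.1}. The base case $1 \leq n < d$ is immediate from the identity
\[
\binom{1/d}{n}c^n = \frac{\prod_{j=0}^{n-1}(1-jd)}{n!}(c/d)^n,
\]
which simultaneously exhibits $a_n$ as an element of $\frac{1}{n!}\mathbb{Z}[c/d]$ and identifies $\binom{1/d}{n}c^n$ as its leading term in $c$.

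For the inductive step, fix $n$ with $d^i \leq n < d^{i+1}$ and assume both assertions for all $k < n$. Each summand of Proposition \ref{prop3.1}(2), after writing $c^{n_0} = d^{n_0}(c/d)^{n_0}$ and $a_k^{n_k} = (k!\,a_k)^{n_k}/(k!)^{n_k}$, takes the shape
\[
n!\,\alpha(n_0,\ldots,n_{d^i-1}) = \frac{n!}{n_0!\prod_{k\geq 1} n_k!\cdot d^M \prod_{k\geq 1}(k!)^{n_k}}\cdot\prod_{j=0}^{m-1}(1-jd)\cdot (c/d)^{n_0}\prod_{k\geq 1}(k!\,a_k)^{n_k},
\]
where $m=\sum_k n_k$ and $M=\sum_{k\geq 1} n_k$. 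By the induction hypothesis the last three factors already lie in $\mathbb{Z}[c/d]$, so the whole question reduces to showing that the rational scalar in front is an integer.

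This integrality is the heart of the argument and is where I expect the main effort. The clean approach is the identity
\[
\frac{n!}{n_0!\prod n_k!\cdot d^M \prod (k!)^{n_k}} = \frac{n!}{n_0!\prod n_k!\prod ((dk)!)^{n_k}}\cdot\prod_{k\geq 1}\left(\frac{(dk)!}{d\cdot k!}\right)^{n_k}.
\]
Using the constraint (\ref{eq3.1}), namely $n_0 + \sum_{k\geq 1}(dk)n_k = n$, the first factor is an ordinary multinomial coefficient (partitioning the $n$-set into $n_0$ singletons and, for each $k\geq 1$, $n_k$ unordered blocks of size $dk$), hence an integer. Each factor of the second product is an integer by the $n_k = 1$ case of Lemma \ref{lem2.1}; equivalently, the multiple $dk$ of $d$ appears among the factors of $(dk)!/k!$.

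Finally, the leading-term claim drops out of the same induction: using $\deg_c a_k \leq k$ inductively, every $\alpha$-summand satisfies $\deg_c \alpha \leq n_0 + \sum k n_k = n - (1 - 1/d)(n - n_0)$, which reaches $n$ only when $n_0 = n$. The constraint (\ref{eq3.1}) then forces $n_k = 0$ for all $k \geq 1$, leaving the unique top-degree contribution $\binom{1/d}{n}c^n$.
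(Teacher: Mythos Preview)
Your proof is correct and follows essentially the same inductive route as the paper. The only cosmetic difference is in the integrality step: the paper invokes Lemma~\ref{lem2.1} to absorb $(d\cdot k!)^{n_k}n_k!$ into $(dkn_k)!$ and then uses the multinomial coefficient $\binom{n}{n_0,\,dn_1,\,\dots}$, whereas you factor the same scalar as the set-partition count $\dfrac{n!}{n_0!\prod n_k!\,((dk)!)^{n_k}}$ times $\prod\bigl((dk)!/(d\cdot k!)\bigr)^{n_k}$---an equivalent piece of bookkeeping.
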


\begin{proof}
By Proposition \ref{prop3.1}, the assertion is true for any $1\leq n<d$.
Now we assume that it is true for any $1\leq n<d^{i}$ and use induction
to show that it is also true for any $d^{i}\leq n<d^{i+1}$. For each
$(n_{0},n_{1},\dots,n_{d^{i}-1})$ such that (\ref{eq3.1}) holds
and $n_{0}\neq n$, we have
\[
\text{deg}_{c}\alpha(n_{0},n_{1},\dots,n_{d^{i}-1})=n_{0}+\sum_{k=1}^{d^{i}-1}kn_{k}<n.
\]
Hence, the leading term of $a_{n}$ is given by $\alpha(n,0,\dots,0)={1/d \choose n}c^{n}$.
Also, by the induction hypothesis, we know that
\begin{align*}
\alpha(n_{0},n_{1},\dots,n_{d^{i}-1}) & \in\frac{1}{n_{0}!}\prod_{k=1}^{d^{i}-1}\frac{1}{(dk!)^{n_{k}}n_{k}!}\mathbb{Z}[c/d]\\
 & \subseteq\frac{1}{n_{0}!}\prod_{k=1}^{d^{i}-1}\frac{1}{(dkn_{k})!}\mathbb{Z}[c/d]\text{ by Lemma \ref{lem2.1},}\\
 & \subseteq\frac{1}{n!}\mathbb{Z}[c/d]\text{ by \eqref{eq3.1}.}
\end{align*}
This completes the proof.
\end{proof}
The following proposition is the most important step of this article.
It shows that under Condition \ref{condA} or \ref{condB}, we are
able to obtain all values of $v_{p}(a_{n})$ simultaneously rathen
than successively.
\begin{prop}
\label{prop3.3} Let $p$, $N$, $d$, and $c$ satisfy Condition
\ref{condA} or \ref{condB}. Then
\begin{enumerate}
\item For any $0\leq k\leq N$, we have
\[
v_{p}(a_{d^{k}})=v_{p}\left(\frac{c}{d^{k+1}}\right).
\]
\item If $n\geq1$ can be decomposed as
\begin{equation}
n=\sum_{k=0}^{N}n_{k}d^{k}\text{ with }0\leq n_{k}<d\text{ for any }0\leq k<N\text{ and }n_{N}\geq0,\label{eq3.3}
\end{equation}
then
\[
v_{p}(a_{n})=\sum_{k=0}^{N}v_{p}\left(\frac{a_{d^{k}}^{n_{k}}}{n_{k}!}\right).
\]
\item Consequently, for any $n\geq1$, we have
\[
v_{p}(a_{n})=v_{p}\left(\frac{c^{n}}{d^{n}n!}\right)-\sum_{k=1}^{N}\left((d-1)v_{p}\left(\frac{c}{d^{k}}\right)-v_{p}((d-1)!)\right)\left\lfloor \frac{n}{d^{k}}\right\rfloor .
\]
\end{enumerate}
\end{prop}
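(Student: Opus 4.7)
The plan is to prove statements (1), (2), and (3) simultaneously by strong induction on $n$, with (1) emerging as the specialization of (2) to $n = d^k$, and (3) following from (2) by a direct computation via Lemma~\ref{lem2.4}. The base case $n = 1$ is immediate: Proposition~\ref{prop3.1}(1) gives $a_1 = c/d$, so $v_p(a_1) = v_p(c/d)$, matching all three claims.

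For the inductive step, fix $n \ge 2$ and assume (1)--(3) for all $n' < n$. I would apply Proposition~\ref{prop3.1}(2) to expand $a_n = \sum \alpha(\tilde n_0, \ldots, \tilde n_{d^i - 1})$ over tuples satisfying $\tilde n_0 + d \sum_{k \ge 1} k \tilde n_k = n$. Since $p \mid d$ under both Conditions~\ref{condA} and~\ref{condB}, each factor $1 - jd$ lies in $1 + p\mathbb{Z}_p$ and so has $p$-adic valuation zero; thus the final product can be ignored throughout. Writing $n = \sum_{k=0}^N n_k d^k$ as in the decomposition of (2), I single out the \emph{greedy} tuple defined by $\tilde n_0 = n_0$ and $\tilde n_{d^{k-1}} = n_k$ for $1 \le k \le N$ (all other entries zero), which satisfies the constraint. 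Substituting the inductive hypothesis $v_p(a_{d^{k-1}}) = v_p(c/d^k)$ (valid for $k \le N$), a direct calculation shows this greedy term has valuation exactly
\[
E(n) := \sum_{k=0}^N \bigl[ n_k v_p(c/d^{k+1}) - v_p(n_k!) \bigr],
\]
which is precisely the value predicted by (2).

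The heart of the proof is to show that every \emph{other} tuple contributes valuation strictly greater than $E(n)$, so the greedy term uniquely minimizes and $v_p(a_n) = E(n)$ with no cancellation. I would substitute the inductive hypothesis (3) for each $v_p(a_k)$ appearing in $\alpha$ and then reorganize using the constraint: the excess $v_p(\alpha) - E(n)$ decomposes into a non-negative combination of gap terms of the form $(d-1) v_p(c/d^j) - v_p((d-1)!)$, one for each way the tuple deviates from the greedy packing of base-$d$ digits. These gaps are strictly positive by the strict inequalities built into Condition~\ref{condA} (with $j = 1$, since $N = 0$) and Condition~\ref{condB} (with $j \le N$ from the lower bound and $j = N+1$ from the upper bound). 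Lemma~\ref{lem2.3} controls the factorial ratios arising when digits are ``combined'' across positions, and Lemma~\ref{lem2.4} supplies the base-$p$/base-$d$ compatibility essential under Condition~\ref{condB}.

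The main obstacle is this strict-minimality argument, which requires careful bookkeeping of how each deviation from the greedy tuple affects the valuation; I expect the two conditions to require slightly different handling but both to reduce to the same inequality. Once (2) is in hand, (1) for $0 \le k \le N$ follows by specialization to $n = d^k$: the greedy tuple has $\tilde n_{d^{k-1}} = 1$ with all other entries zero, giving valuation $v_p(a_{d^{k-1}}) - v_p(d) = v_p(c/d^{k+1})$ by the inductive (1). Finally, (3) is obtained from (2) by expanding $v_p(a_{d^k}^{n_k}/n_k!) = n_k v_p(c/d^{k+1}) - v_p(n_k!)$ and applying Lemma~\ref{lem2.4} to rewrite $\sum_{k=0}^N v_p(n_k!)$ in terms of $v_p(n!)$, thereby producing the floor-sum correction stated in (3).
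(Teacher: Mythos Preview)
Your overall strategy coincides with the paper's: induct on $n$, isolate the canonical (greedy) summand $\alpha(\sigma_{\text{can}})$ in the expansion of Proposition~\ref{prop3.1}, and show every other $\alpha(\sigma)$ has strictly larger valuation; the derivation of (3) from (1)--(2) via Lemma~\ref{lem2.4} is likewise the same.

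However, your handling of the strict-minimality step contains a sign error and is too vague to go through as written. You assert that the gaps $(d-1)v_p(c/d^j)-v_p((d-1)!)$ are ``strictly positive''; in fact for $j=N+1$ this quantity is strictly \emph{negative} by \eqref{eq1.5} and the right-hand inequality in \eqref{eq1.6}, and under Condition~\ref{condA} (where $N=0$) this is the \emph{only} gap that appears. The argument must exploit both signs: deviations that push weight to indices $d^k$ with $k\ge N$ are penalized because the $(N{+}1)$-gap is negative (so it enters with a negative coefficient), while failures to carry within the first $N$ base-$d$ digits are penalized because the gaps for $1\le j\le N$ are positive. The paper does not achieve this by a single ``non-negative combination'' decomposition; it constructs a chain of intermediate partitions $\sigma\to\sigma_0\to\sigma_1\to\cdots\to\sigma_N\to\sigma_{\text{can}}$. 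The step $\sigma\to\sigma_0$ replaces each $a_j$ by its canonical expansion and is controlled by Lemma~\ref{lem2.1}; the steps $\sigma_{j-1}\to\sigma_j$ for $1\le j\le N$ carry digit-by-digit and use Lemma~\ref{lem2.3} together with the \emph{lower} bound in \eqref{eq1.6} (this is where $d$ being a \emph{power} of $p$ is essential); the final step $\sigma_N\to\sigma_{\text{can}}$ collapses the top and uses Lemma~\ref{lem2.1} together with the \emph{upper} bound \eqref{eq1.5}/\eqref{eq1.6}. Your sketch does not yet contain this mechanism, and in particular would not handle Condition~\ref{condA} correctly since there all relevant gaps are negative.
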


\begin{proof}
By Proposition \ref{prop3.1}, the assertions are true for any $1\leq n<d$.
Now we assume that they are true for any $1\leq n<d^{i}$ and use
induction to show that they are also true for any $d^{i}\leq n<d^{i+1}$.

We know that each partition $\sigma$ of $n$ with a particular form
gives a summand $\alpha(\sigma)$ of $a_{n}$. We call (\ref{eq3.3})
the canonical partition $\sigma_{\text{can}}$ of $n$. We claim that
$v_{p}(\alpha(\sigma))>v_{p}(\alpha(\sigma_{\text{can}}))$ unless
$\sigma=\sigma_{\text{can}}$.

Let $\sigma$ be an arbitrary partition $n=m_{0}+d\sum_{j=1}^{d^{i}-1}jm_{j}$
and, for each $j$, let $j=\sum_{k=0}^{N}m_{j,k}d^{k}$ be the canonical
partition of $j$. Then we can produce another partition $\sigma_{0}$
which is given by
\begin{align*}
n & =m_{0}+d\sum_{j=1}^{d^{i}-1}jm_{j}=m_{0}+d\sum_{j=1}^{d^{i}-1}\left(\sum_{k=0}^{N}m_{j,k}d^{k}\right)m_{j}\\
 & =m_{0}+d\sum_{k=0}^{N}\left(d^{k}\sum_{j=1}^{d^{i}-1}m_{j}m_{j,k}\right)=m_{0}+d\sum_{k=0}^{N}d^{k}M_{d^{k}},
\end{align*}
where
\begin{equation}
M_{d^{k}}=\sum_{j=1}^{d^{i}-1}m_{j}m_{j,k}.\label{eq3.4}
\end{equation}
Now
\begin{align*}
v_{p} & (\alpha(\sigma))=v_{p}\left(\frac{c^{m_{0}}}{d^{m_{0}}m_{0}!}\prod_{j=1}^{d^{i}-1}\frac{a_{j}^{m_{j}}}{d^{m_{j}}m_{j}!}\right)\text{ since }p\mid d,\\
 & =v_{p}\left(\frac{c^{m_{0}}}{d^{m_{0}}m_{0}!}\right)+\sum_{j=1}^{d^{i}-1}\left(m_{j}\sum_{k=0}^{N}v_{p}\left(\frac{a_{d^{k}}^{m_{j,k}}}{m_{j,k}!}\right)-v_{p}(d^{m_{j}}m_{j}!)\right)\text{ by induction,}\\
 & =v_{p}\left(\frac{c^{m_{0}}}{d^{m_{0}}m_{0}!}\right)+\sum_{k=0}^{N}v_{p}(a_{d^{k}}^{M_{d^{k}}})-\sum_{k=0}^{N}\sum_{j=1}^{d^{i}-1}v_{p}(m_{j,k}!^{m_{j}})-\sum_{j=1}^{d^{i}-1}v_{p}(d^{m_{j}}m_{j}!)
\end{align*}
and
\[
v_{p}(\alpha(\sigma_{0}))=v_{p}\left(\frac{c^{m_{0}}}{d^{m_{0}}m_{0}!}\right)+\sum_{k=0}^{N}v_{p}(a_{d^{k}}^{M_{d^{k}}})-\sum_{k=0}^{N}v_{p}(d^{M_{d^{k}}}M_{d^{k}}!).
\]
If $\sigma\neq\sigma_{0}$, then there is some $j\notin\{d^{k}:0\leq k\leq N\}$
such that $m_{j}\neq0$. Therefore,
\begin{align*}
v_{p} & (\alpha(\sigma))-v_{p}(\alpha(\sigma_{0}))=\sum_{k=0}^{N}v_{p}(d^{M_{d^{k}}}M_{d^{k}}!)-\sum_{k=0}^{N}\sum_{j=1}^{d^{i}-1}v_{p}(m_{j,k}!^{m_{j}})-\sum_{j=1}^{d^{i}-1}v_{p}(d^{m_{j}}m_{j}!)\\
 & \geq\sum_{k=0}^{N}v_{p}(d^{M_{d^{k}}}M_{d^{k}}!)-\sum_{k=0}^{N}\sum_{j=1}^{d^{i}-1}v_{p}(m_{j,k}!^{m_{j}})-\sum_{j=1}^{d^{i}-1}v_{p}(d^{m_{j}})-\sum_{k=0}^{N}\sum_{\substack{j=1\\
m_{j,k}\neq0
}
}^{d^{i}-1}v_{p}(m_{j}!)\\
 & =\sum_{j=1}^{d^{i}-1}\left(\sum_{k=0}^{N}m_{j,k}-1\right)m_{j}v_{p}(d)+\sum_{k=0}^{N}\left(v_{p}(M_{d^{k}}!)-\sum_{\substack{j=1\\
m_{j,k}\neq0
}
}^{d^{i}-1}v_{p}(m_{j,k}!^{m_{j}}m_{j}!)\right)\\
 & \geq\sum_{j=1}^{d^{i}-1}\left(\sum_{k=0}^{N}m_{j,k}-1\right)m_{j}v_{p}(d)\text{ by Lemma \ref{lem2.1} and \eqref{eq3.4},}\\
 & >0\text{ since }\sigma\neq\sigma_{0}.
\end{align*}
Next, for each $1\leq j\leq N$, we let $\sigma_{j}$ be the partition
\[
n=\sum_{k=0}^{j-1}n_{k}d^{k}+N_{j}d^{j}+\sum_{k=j}^{N}M_{d^{k}}d^{k+1}.
\]
We also let $N_{0}=m_{0}$ and $a_{d^{-1}}=c$. For any $1\leq j\leq N$,
if $\sigma_{j-1}\neq\sigma_{j}$, then we have
\begin{equation}
N_{j-1}+M_{d^{j-1}}d=n_{j-1}+N_{j}d\label{eq3.5}
\end{equation}
and
\begin{align*}
v_{p} & (\alpha(\sigma_{j-1}))-v_{p}(\alpha(\sigma_{j}))=v_{p}\left(\frac{a_{d^{j-2}}^{N_{j-1}}}{d^{N_{j-1}}N_{j-1}!}\frac{a_{d^{j-1}}^{M_{d^{j-1}}}}{d^{M_{d^{j-1}}}M_{d^{j-1}}!}\right)-v_{p}\left(\frac{a_{d^{j-2}}^{n_{j-1}}}{d^{n_{j-1}}n_{j-1}!}\frac{a_{d^{j-1}}^{N_{j}}}{d^{N_{j}}N_{j}!}\right)\\
 & =v_{p}\left(\frac{(c/d^{j})^{N_{j-1}}}{N_{j-1}!}\frac{(c/d^{j})^{M_{d^{j-1}}}}{d^{M_{d^{j-1}}}M_{d^{j-1}}!}\right)-v_{p}\left(\frac{(c/d^{j})^{n_{j-1}}}{n_{j-1}!}\frac{(c/d^{j})^{N_{j}}}{d^{N_{j}}N_{j}!}\right)\text{ by induction,}\\
 & =(N_{j}-M_{d^{j-1}})\left((d-1)v_{p}\left(\frac{c}{d^{j}}\right)+v_{p}(d)\right)-v_{p}\left(\frac{N_{j-1}!M_{d^{j-1}}!}{n_{j-1}!N_{j}!}\right)\text{ by \eqref{eq3.5},}\\
 & >(N_{j}-M_{d^{j-1}})v_{p}(d!)-v_{p}\left(\frac{N_{j-1}!M_{d^{j-1}}!}{n_{j-1}!N_{j}!}\right)\text{ by the LHS of \eqref{eq1.6} and }\sigma_{j-1}\neq\sigma_{j},\\
 & \geq0\text{ by Lemma \ref{lem2.3}. (Here we need the condition }d\text{ is a power of }p\text{.)}
\end{align*}
Next, if $\sigma_{N}\neq\sigma_{\text{can}}$, then by the same reasoning
as above, we have
\begin{align*}
v_{p} & (\alpha(\sigma_{N}))-v_{p}(\alpha(\sigma_{\text{can}}))=-M_{d^{N}}\left((d-1)v_{p}\left(\frac{c}{d^{N+1}}\right)+v_{p}(d)\right)-v_{p}\left(\frac{N_{N}!M_{d^{N}}!}{n_{N}!}\right)\\
 & >v_{p}\left(\frac{n_{N}!}{N_{N}!M_{d^{N}}!d!^{M_{d^{N}}}}\right)\text{ by \eqref{eq1.5}, the RHS of \eqref{eq1.6}, and }\sigma_{N}\neq\sigma_{\text{can}},\\
 & \geq0\text{ by Lemma \ref{lem2.1}.}
\end{align*}
We have shown that $v_{p}(\alpha(\sigma))>v_{p}(\alpha(\sigma_{0}))>\dots>v_{p}(\alpha(\sigma_{N})>v_{p}(\alpha(\sigma_{\text{can}}))$,
so
\[
v_{p}(a_{n})=v_{p}(\alpha(\sigma_{\text{can}}))=v_{p}\left(\frac{c^{n_{0}}}{d^{n_{0}}n_{0}!}\right)+\sum_{k=1}^{N}v_{p}\left(\frac{a_{d^{k-1}}^{n_{k}}}{d^{n_{k}}n_{k}!}\right),
\]
which implies parts (1) and (2) immediately. Part (3) is a corollary
of parts (1) and (2) because
\begin{align*}
v_{p}(a_{n}) & =\sum_{k=0}^{N}v_{p}\left(\frac{a_{d^{k}}^{n_{k}}}{n_{k}!}\right)=\sum_{k=0}^{N}n_{k}v_{p}\left(\frac{c}{d^{k+1}}\right)-\sum_{k=0}^{N}v_{p}(n_{k}!)\\
 & =\sum_{k=0}^{N}n_{k}v_{p}\left(\frac{c}{d^{k+1}}\right)+\sum_{k=0}^{N}n_{k}v_{p}(d^{k}!)-v_{p}(n!)\text{ by Lemma \ref{lem2.4},}\\
 & =\sum_{k=0}^{N-1}\left(\left\lfloor \frac{n}{d^{k}}\right\rfloor -\left\lfloor \frac{n}{d^{k+1}}\right\rfloor d\right)v_{p}\left(\frac{cd^{k}!}{d^{k+1}}\right)+\left\lfloor \frac{n}{d^{N}}\right\rfloor v_{p}\left(\frac{cd^{N}!}{d^{N+1}}\right)-v_{p}(n!)\\
 & =v_{p}\left(\frac{c^{n}}{d^{n}n!}\right)+\sum_{k=1}^{N}\left\lfloor \frac{n}{d^{k}}\right\rfloor v_{p}\left(\frac{cd^{k}!}{d^{k+1}}\right)-\sum_{k=1}^{N}\left\lfloor \frac{n}{d^{k}}\right\rfloor dv_{p}\left(\frac{cd^{k-1}!}{d^{k}}\right)\\
 & =v_{p}\left(\frac{c^{n}}{d^{n}n!}\right)-\sum_{k=1}^{N}\left\lfloor \frac{n}{d^{k}}\right\rfloor \left((d-1)v_{p}\left(\frac{c}{d^{k}}\right)-v_{p}\left(\frac{d^{k}!}{d(d^{k-1}!)^{d}}\right)\right)\\
 & =v_{p}\left(\frac{c^{n}}{d^{n}n!}\right)-\sum_{k=1}^{N}\left\lfloor \frac{n}{d^{k}}\right\rfloor \left((d-1)v_{p}\left(\frac{c}{d^{k}}\right)-v_{p}((d-1)!)\right).
\end{align*}
This completes the proof.
\end{proof}
From Proposition \ref{prop3.3}, we can deduce that the sequence $v_{p}(a_{n})/n$
has a negative limit.
\begin{prop}
\label{prop3.4} Let $p$, $N$, $d$, and $c$ satisfy Condition
\ref{condA} or \ref{condB}. Then the sequence $v_{p}(a_{n})$ is
subadditive and
\[
\lim_{n\to\infty}\frac{v_{p}(a_{n})}{n}=\inf_{n}\frac{v_{p}(a_{n})}{n}=\frac{v_{p}(c/d^{N+1})}{d^{N}}-\frac{1}{(p-1)d^{N}}<0.
\]
\end{prop}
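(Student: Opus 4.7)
The plan is to exploit the closed-form expression for $v_p(a_n)$ provided by Proposition \ref{prop3.3}(3). From it both the subadditivity of $v_p(a_n)$ and the explicit value of the limit will follow by elementary manipulations, and Fekete's lemma will identify the limit with the infimum.

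Setting $A_k := (d-1)v_p(c/d^k) - v_p((d-1)!)$ for $1 \leq k \leq N$, Proposition \ref{prop3.3}(3) can be rewritten as
\[
v_p(a_n) = n\,v_p(c/d) - v_p(n!) - \sum_{k=1}^{N} A_k \left\lfloor n/d^k \right\rfloor.
\]
To establish subadditivity I would combine three facts: (i) $v_p((m+n)!) \geq v_p(m!) + v_p(n!)$, which follows from Lemma \ref{lem2.2} via $s_p(m+n) \leq s_p(m) + s_p(n)$ (this is Kummer); (ii) $\lfloor (m+n)/d^k \rfloor \geq \lfloor m/d^k \rfloor + \lfloor n/d^k \rfloor$, which is immediate; and (iii) $A_k \geq 0$. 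Statement (iii) is vacuous under Condition \ref{condA}, and under Condition \ref{condB} it follows from the left-hand inequality of \eqref{eq1.6}: for $1 \leq k \leq N$ one has $v_p(c) - k v_p(d) \geq v_p(c) - N v_p(d) > v_p((d-1)!)/(d-1)$. Together (i)--(iii) immediately yield $v_p(a_{m+n}) \leq v_p(a_m) + v_p(a_n)$.

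Once subadditivity is known, Fekete's lemma gives $\lim_{n\to\infty} v_p(a_n)/n = \inf_n v_p(a_n)/n \in [-\infty,\infty)$. To evaluate the limit I would divide the displayed formula by $n$ and pass to the limit using $v_p(n!)/n \to 1/(p-1)$ (from Lemma \ref{lem2.2}) together with $\lfloor n/d^k \rfloor/n \to 1/d^k$, obtaining
\[
\lim_{n\to\infty} \frac{v_p(a_n)}{n} = v_p(c/d) - \frac{1}{p-1} - \sum_{k=1}^{N} \frac{A_k}{d^k}.
\]
A short induction on $N$ then collapses the right-hand side to the claimed value $v_p(c/d^{N+1})/d^N - 1/((p-1)d^N)$; the telescoping rests on the identity $v_p(d!) = v_p((d-1)!) + v_p(d) = (d-1)/(p-1)$, whose second equality uses that $d$ is a power of $p$ under Condition \ref{condB} (the case $N=0$ under Condition \ref{condA} is trivial since the sum is empty).

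Finally, strict negativity reduces to $v_p(c/d^{N+1}) < 1/(p-1)$. Both the upper bound in \eqref{eq1.5} (for $N=0$) and the right-hand inequality in \eqref{eq1.6} (for $N \geq 1$) give $v_p(c/d^{N+1}) < v_p((d-1)!)/(d-1)$, and by Lemma \ref{lem2.2} the latter equals $(d-1-s_p(d-1))/((p-1)(d-1)) < 1/(p-1)$, with strict inequality because $s_p(d-1) \geq 1$ for $d \geq 2$. The only step I expect to require any real calculation is the inductive verification of the closed form for the limit; everything else is routine bookkeeping with the auxiliary lemmas already established.
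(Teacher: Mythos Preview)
Your proposal is correct and follows essentially the same route as the paper: both invoke Proposition~\ref{prop3.3}(3), deduce subadditivity from the superadditivity of $v_p(n!)$ and of $\lfloor n/d^k\rfloor$ together with $A_k\ge 0$, apply Fekete's lemma, compute the limit termwise, and bound the result using the right-hand inequalities of \eqref{eq1.5}/\eqref{eq1.6} and Lemma~\ref{lem2.2}. The only cosmetic difference is in collapsing $v_p(c/d)-1/(p-1)-\sum_{k=1}^N A_k/d^k$: the paper recognizes this expression as $v_p(a_{d^N})/d^N-1/((p-1)d^N)$ and then cites Proposition~\ref{prop3.3}(1), whereas you do an equivalent induction on $N$ using $v_p(d!)=(d-1)/(p-1)$.
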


\begin{proof}
The subadditivity of $v_{p}(a_{n})$ can be easily seen from Proposition
\ref{prop3.3}. Therefore, by Fekete's Lemma, the limit of $v_{p}(a_{n})/n$
exists and is equal to the infimum of $v_{p}(a_{n})/n$. By Proposition
\ref{prop3.3} and Lemma \ref{lem2.2},
\begin{align*}
\inf_{n}\frac{v_{p}(a_{n})}{n} & =\inf_{n}\left(v_{p}\left(\frac{c}{d}\right)-\frac{n-s_{p}(n)}{(p-1)n}-\frac{1}{n}\sum_{k=1}^{N}\left((d-1)v_{p}\left(\frac{c}{d^{k}}\right)-v_{p}((d-1)!)\right)\left\lfloor \frac{n}{d^{k}}\right\rfloor \right)\\
 & =v_{p}\left(\frac{c}{d}\right)-\frac{1}{p-1}-\sum_{k=1}^{N}\left((d-1)v_{p}\left(\frac{c}{d^{k}}\right)-v_{p}((d-1)!)\right)\frac{1}{d^{k}}\\
 & =\frac{v_{p}(a_{d^{N}})}{d^{N}}-\frac{1}{(p-1)d^{N}}=\frac{v_{p}(c/d^{N+1})}{d^{N}}-\frac{1}{(p-1)d^{N}}.
\end{align*}
Moreover, the limit is negative because
\begin{align*}
\frac{v_{p}(c/d^{N+1})}{d^{N}} & <\frac{v_{p}((d-1)!)}{(d-1)d^{N}}\text{ by \eqref{eq1.5} and the RHS of \eqref{eq1.6},}\\
 & =\frac{(d-1)-s_{p}(d-1)}{(p-1)(d-1)d^{N}}<\frac{1}{(p-1)d^{N}}\text{ by Lemma \ref{lem2.2}.}
\end{align*}
This completes the proof.
\end{proof}
The last ingredient needed for the proof of Theorem \ref{thm1.2}
is the following inequality.
\begin{prop}
\label{prop3.5} Let $p$, $N$, $d$, and $c$ satisfy Condition
\ref{condA} or \ref{condB}. If $n=\sum_{k=1}^{n}km_{k}$, where
$m_{k}\geq0$ for any $1\leq k\leq n$, then
\[
v_{p}(a_{n})\leq\sum_{k=1}^{n}v_{p}\left(\frac{a_{k}^{m_{k}}}{m_{k}!}\right).
\]
\end{prop}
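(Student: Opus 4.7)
Given the closed-form expression for $v_{p}(a_{n})$ in Proposition \ref{prop3.3}(3), the plan is to substitute it on both sides of the desired inequality and reduce matters to a comparison between a non-positive quantity and a non-negative one. Writing $c_{j}:=(d-1)v_{p}(c/d^{j})-v_{p}((d-1)!)$ for $1\le j\le N$, Proposition \ref{prop3.3}(3) reads $v_{p}(a_{m})=v_{p}(c^{m}/(d^{m}m!))-\sum_{j=1}^{N}c_{j}\lfloor m/d^{j}\rfloor$. Substituting this on both sides of $v_{p}(a_{n})\le\sum_{k=1}^{n}v_{p}(a_{k}^{m_{k}}/m_{k}!)$ and using $n=\sum_{k}km_{k}$ (so that the $v_{p}(c^{n}/d^{n})$ contributions cancel) transforms the inequality into the equivalent form
\[
v_{p}\!\left(\frac{\prod_{k=1}^{n}(k!)^{m_{k}}\,m_{k}!}{n!}\right)\le\sum_{j=1}^{N}c_{j}\left(\left\lfloor \frac{n}{d^{j}}\right\rfloor -\sum_{k=1}^{n}m_{k}\left\lfloor \frac{k}{d^{j}}\right\rfloor \right).
\]

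I would then verify that the left-hand side of this display is $\le 0$ while the right-hand side is $\ge 0$. The left-hand side is $\le 0$ because $n!/\bigl(\prod_{k}(k!)^{m_{k}}m_{k}!\bigr)$ is the number of set-partitions of $\{1,\dots,n\}$ containing exactly $m_{k}$ blocks of size $k$ for each $k$; in particular it is a non-negative integer, so $\prod_{k}(k!)^{m_{k}}m_{k}!$ divides $n!$.

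For the right-hand side, the bracketed factor $\lfloor n/d^{j}\rfloor-\sum_{k}m_{k}\lfloor k/d^{j}\rfloor$ is non-negative, since the subtracted sum is an integer bounded above by $\sum_{k}m_{k}(k/d^{j})=n/d^{j}$ and therefore by $\lfloor n/d^{j}\rfloor$. It remains to check that $c_{j}\ge 0$ for every $1\le j\le N$. Under Condition \ref{condA} the range is empty ($N=0$). Under Condition \ref{condB} the left inequality of (\ref{eq1.6}) gives $(d-1)v_{p}(c)>(d-1)Nv_{p}(d)+v_{p}((d-1)!)$, whence $c_{j}>(d-1)(N-j)v_{p}(d)\ge 0$ for every $1\le j\le N$. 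The asserted inequality then follows, the main ``obstacle'' being bookkeeping rather than any substantive difficulty: once the substitution from Proposition \ref{prop3.3}(3) is carried out cleanly, the problem splits into the classical multinomial divisibility $\prod_{k}(k!)^{m_{k}}m_{k}!\mid n!$ and a straightforward sign analysis of the $c_{j}$'s under each Condition.
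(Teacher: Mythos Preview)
Your proof is correct and follows essentially the same route as the paper's: both substitute the formula of Proposition~\ref{prop3.3}(3), cancel the $v_{p}(c^{n}/d^{n})$ terms, and reduce the inequality to the two facts that $\prod_{k}(k!)^{m_{k}}m_{k}!\mid n!$ and that each coefficient $(d-1)v_{p}(c/d^{j})-v_{p}((d-1)!)$ is non-negative for $1\le j\le N$. The paper packages the second fact as ``$e(n)$ is superadditive'' and invokes Lemma~\ref{lem2.1} for the first, whereas you argue the divisibility via the set-partition count and verify the sign of the $c_{j}$ explicitly from (\ref{eq1.6}); these are cosmetic differences only.
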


\begin{proof}
Let
\begin{equation}
e(n)=\sum_{k=1}^{N}\left((d-1)v_{p}\left(\frac{c}{d^{k}}\right)-v_{p}((d-1)!)\right)\left\lfloor \frac{n}{d^{k}}\right\rfloor .\label{eq3.6}
\end{equation}
It is clear that the sequence $e(n)$ is superadditive. Then
\begin{align*}
\sum_{k=1}^{n}v_{p}\left(\frac{a_{k}^{m_{k}}}{m_{k}!}\right) & =\sum_{k=1}^{n}v_{p}\left(\frac{c^{km_{k}}}{d^{km_{k}}}\right)-\sum_{k=1}^{n}v_{p}(k!^{m_{k}}m_{k}!)-\sum_{k=1}^{n}m_{k}e(k)\text{ by Proposition \ref{prop3.3},}\\
 & \geq v_{p}\left(\frac{c^{n}}{d^{n}}\right)-v_{p}(n!)-e(n)\text{ by Lemma \ref{lem2.1},}\\
 & =v_{p}(a_{n})\text{ by Proposition \ref{prop3.3}.}\qedhere
\end{align*}
\end{proof}
Now we are ready to give the proof of Theorem \ref{thm1.2}.
\begin{proof}
[Proof of Theorem \ref{thm1.2}] By (\ref{eq1.1}) and Proposition
\ref{prop3.4}, $\varphi_{c}(z)$ is convergent when
\[
|z|_{p}^{d}>\lim_{n\to\infty}|a_{n}|_{p}^{1/n}=\lim_{n\to\infty}p^{-v_{p}(a_{n})/n}=r_{N}.
\]
By Lemma \ref{lem2.5},
\[
\varphi_{c}^{-1}(z)=z\left(1+\sum_{n=1}^{\infty}\frac{a_{n}'}{z^{nd}}\right),
\]
where
\[
a_{n}'=-\sum_{\sum_{k=1}^{n}km_{k}=n}\left(\prod_{j=2}^{\sum_{k=1}^{n}m_{k}}(nd-j)\prod_{k=1}^{n}\frac{a_{k}^{m_{k}}}{m_{k}!}\right).
\]
By Proposition \ref{prop3.5}, $v_{p}(a_{n}')\geq v_{p}(a_{n})$ for
any $n\geq1$. Now we want to show that $v_{p}(a_{n}')=v_{p}(a_{n})$
for infinitely many $n$, which will then imply
\[
\liminf_{n\to\infty}\frac{v_{p}(a_{n}')}{n}=\liminf_{n\to\infty}\frac{v_{p}(a_{n})}{n}
\]
and the maximal convergent open disks of $\varphi_{c}(z)$ and $\varphi_{c}^{-1}(z)$
are the same. We claim that if $n$ is a power of $p$ and $m_{n}=0$,
then
\[
v_{p}\left(\prod_{j=2}^{\sum_{k=1}^{n}m_{k}}(nd-j)\prod_{k=1}^{n}\frac{a_{k}^{m_{k}}}{m_{k}!}\right)>v_{p}(a_{n}).
\]
Suppose not, then by Propositions \ref{prop3.3} and \ref{prop3.5},
\begin{align*}
0 & =v_{p}\left(\prod_{j=2}^{\sum_{k=1}^{n}m_{k}}(nd-j)\prod_{k=1}^{n}\frac{a_{k}^{m_{k}}}{m_{k}!}\right)-v_{p}(a_{n})\\
 & =\sum_{j=2}^{\sum_{k=1}^{n}m_{k}}v_{p}(nd-j)+\left(v_{p}(n!)-\sum_{k=1}^{n}v_{p}(k!^{m_{k}}m_{k}!)\right)+\left(e(n)-\sum_{k=1}^{n}m_{k}e(k)\right),
\end{align*}
where $e(n)$ is given by (\ref{eq3.6}). Therefore, we have
\begin{align*}
0 & =(p-1)\left(v_{p}(n!)-\sum_{k=1}^{n}v_{p}(k!^{m_{k}}m_{k}!)\right)\\
 & =n-s_{p}(n)-\sum_{k=1}^{n}\left(m_{k}(k-s_{p}(k))+m_{k}-s_{p}(m_{k})\right)\text{ by Lemma \ref{lem2.2},}\\
 & =\sum_{k=1}^{n}\left(m_{k}(s_{p}(k)-1)+s_{p}(m_{k})\right)-1\text{ since }n\text{ is a power of }p.
\end{align*}
It follows that there is exactly one $m_{k_{0}}\neq0$ and $n=k_{0}m_{k_{0}}$.
If $m_{n}=0$, then $m_{k_{0}}\geq p$ and
\[
\sum_{j=2}^{\sum_{k=1}^{n}m_{k}}v_{p}(nd-j)\geq v_{p}(nd-m_{k_{0}})>0.
\]
This is a contradiction, from which we conclude that $v_{p}(a_{n}')=v_{p}(a_{n})$
if $n$ is a power of $p$. Thus the first assertion is proved. For
the second assertion, we note that
\[
\frac{\varphi_{c}(z)-\varphi_{c}(w)}{z-w}=1-\sum_{n=1}^{\infty}\sum_{i=1}^{nd-1}\frac{a_{n}}{z^{i}w^{nd-i}}.
\]
If $z,w\in D(\infty,r_{N}^{1/d})$, then by Proposition \ref{prop3.4},
we have
\[
\left|\frac{a_{n}}{z^{i}w^{nd-i}}\right|_{p}<\frac{|a_{n}|_{p}}{r_{N}^{n}}=\left(\frac{p^{-v_{p}(a_{n})/n}}{\lim_{n\to\infty}p^{-v_{p}(a_{n})/n}}\right)^{n}<1.
\]
Therefore, $|\varphi_{c}(z)-\varphi_{c}(w)|_{p}=|z-w|_{p}$ on $D(\infty,r_{N}^{1/d})$.
\end{proof}

\section{\label{sec4} Proof of Theorem \ref{thm1.3}}

In this section, we focus on the properties of $b_{n}$ and give the
proof of Theorem \ref{thm1.3}. In addition to Proposition \ref{prop3.5},
we need two more inequalities.
\begin{prop}
\label{prop4.1} Let $p$, $N$, $d$, and $c$ satisfy Condition
\ref{condA} or \ref{condB}. Then
\begin{enumerate}
\item If $d\mid n$, then $v_{p}(da_{n})\leq v_{p}(a_{n/d})$.
\item If $1\leq i<n/d$, then $v_{p}(da_{n})<v_{p}(a_{i}c^{n-id})$.
\end{enumerate}
\end{prop}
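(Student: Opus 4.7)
The plan is to exploit the explicit sum representation of $a_n$ from Proposition \ref{prop3.1} together with the key structural fact established inside the proof of Proposition \ref{prop3.3}: the canonical partition $\sigma_{\mathrm{can}}$ uniquely minimizes $v_p(\alpha(\sigma))$ among all partitions appearing in the sum for $a_n$. By the non-archimedean triangle inequality this immediately gives the working bound
\[
v_p(a_n) \;\leq\; v_p(\alpha(\sigma))
\]
for every single partition $\sigma$ in Proposition \ref{prop3.1}. Both parts of Proposition \ref{prop4.1} then reduce to selecting one well-chosen single-term partition.

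For part (1), I would apply the bound with the partition whose only nonzero entry is $n_{n/d}=1$ (in particular $n_0=0$). When $d^e \leq n < d^{e+1}$, the divisibility $d\mid n$ forces $n/d \leq d^e - 1$, so the partition is admissible. The corresponding summand is simply $\alpha = a_{n/d}/d$, so the working bound becomes $v_p(a_n) \leq v_p(a_{n/d}) - v_p(d)$, which rearranges to $v_p(da_n) \leq v_p(a_{n/d})$.

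For part (2), I would use the partition with $n_0 = n - id$, $n_i = 1$, and all other coordinates zero; the constraints $1 \leq i < n/d < d^e$ guarantee admissibility. Because $p \mid d$ under both Conditions \ref{condA} and \ref{condB}, every factor $1 - jd$ with $j \geq 1$ is a $p$-adic unit, so $v_p\bigl(\prod_{j=0}^{n-id}(1-jd)\bigr) = 0$. A direct calculation of $v_p(\alpha(\sigma))$ combined with the working bound then yields
\[
v_p(a_i c^{n-id}) - v_p(da_n) \;\geq\; (n-id)\,v_p(d) + v_p((n-id)!),
\]
and since $n - id \geq 1$ and $v_p(d) \geq 1$, the right-hand side is at least $v_p(d) > 0$, which delivers the strict inequality.

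The heavy lifting has already been done in the proof of Proposition \ref{prop3.3}, so there is no substantial obstacle here; the only point requiring a moment of care is to identify these particular single-term partitions as elements of the sum in Proposition \ref{prop3.1} and to note that the product $\prod_{j\geq 0}(1-jd)$ contributes trivially to the $p$-adic valuation, both of which are routine.
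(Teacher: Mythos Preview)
Your argument is correct, and it is genuinely different from the paper's. The paper proves part~(1) by writing out the canonical partitions of $n/d$ and of $n$ and comparing $v_p(a_{n/d})$ with $v_p(da_n)$ term by term via the explicit formula of Proposition~\ref{prop3.3}(1)(2); this requires a careful appeal to Lemmas~\ref{lem2.1} and~\ref{lem2.4} and to the right-hand bound in (\ref{eq1.5})/(\ref{eq1.6}). Part~(2) in the paper is then deduced from part~(1) together with Proposition~\ref{prop3.3}(3) and the monotonicity of the auxiliary function $e(n)$. Your route bypasses all of this: once Proposition~\ref{prop3.3} has established $v_p(a_n)=v_p(\alpha(\sigma_{\mathrm{can}}))\le v_p(\alpha(\sigma))$ for every admissible partition $\sigma$, you simply feed in the single-term partitions $n=d\cdot(n/d)$ and $n=(n-id)+d\cdot i$ and read off the two inequalities directly, the strictness in~(2) coming for free from the surplus $(n-id)v_p(d)>0$. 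This is shorter and more transparent; the trade-off is that it leans on a fact buried \emph{inside} the proof of Proposition~\ref{prop3.3} rather than on its clean statement, whereas the paper's argument cites only the final formulas. One small wording point: the inequality $v_p(a_n)\le v_p(\alpha(\sigma))$ does not follow from the ultrametric triangle inequality alone---you need the unique-minimum property to conclude $v_p(a_n)=v_p(\alpha(\sigma_{\mathrm{can}}))$ first, as you in fact note.
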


\begin{proof}
If $d\mid n$, we let
\[
n/d=\sum_{k=0}^{N}m_{k}d^{k}\qquad\text{and}\qquad n=\sum_{k=0}^{N-2}m_{k}d^{k+1}+(m_{N-1}+m_{N}d)d^{N}
\]
be the canonical partitions (\ref{eq3.3}) of $n/d$ and $n$. Then
we have
\begin{align*}
v_{p}(da_{n}) & =v_{p}(d)+\sum_{k=0}^{N-2}v_{p}\left(\frac{a_{d^{k+1}}^{m_{k}}}{m_{k}!}\right)+v_{p}\left(\frac{a_{d^{N}}^{m_{N-1}+m_{N}d}}{(m_{N-1}+m_{N}d)!}\right)\text{ by Proposition \ref{prop3.3},}\\
 & =v_{p}(d)+\sum_{k=0}^{N-2}v_{p}\left(\frac{a_{d^{k}}^{m_{k}}}{d^{m_{k}}m_{k}!}\right)+v_{p}\left(\frac{a_{d^{N-1}}^{m_{N-1}}a_{d^{N}}^{m_{N}d}}{d^{m_{N-1}}(m_{N-1}+m_{N}d)!}\right)\text{ by Proposition \ref{prop3.3},}\\
 & =v_{p}(d)+\sum_{k=0}^{N}v_{p}\left(\frac{a_{d^{k}}^{m_{k}}}{d^{m_{k}}m_{k}!}\right)+m_{N}v_{p}(a_{d^{N}}^{d-1}d)-v_{p}\left(\frac{(m_{N-1}+m_{N}d)!}{m_{N-1}!m_{N}!}\right)\\
 & \leq v_{p}(d)+\sum_{k=0}^{N}v_{p}\left(\frac{a_{d^{k}}^{m_{k}}}{d^{m_{k}}m_{k}!}\right)+m_{N}v_{p}(a_{d^{N}}^{d-1}d)-m_{N}v_{p}(d!)\text{ by Lemmas \ref{lem2.1} and \ref{lem2.4},}\\
 & =v_{p}(a_{n/d})+\left(1-\sum_{k=0}^{N}m_{k}\right)v_{p}(d)+m_{N}\left((d-1)v_{p}\left(\frac{c}{d^{N+1}}\right)-v_{p}((d-1)!)\right)\\
 & \leq v_{p}(a_{n/d})\text{ by \eqref{eq1.5} and the RHS of \eqref{eq1.6}.}
\end{align*}
If $1\leq i<n/d$, then
\begin{align*}
v_{p}(a_{i}c^{n-id}) & \geq v_{p}(da_{id}c^{n-id})\text{ by part (1),}\\
 & =v_{p}(d)+v_{p}\left(\frac{c^{n}}{d^{id}(id)!}\right)-e(id)\text{ by Proposition \ref{prop3.3} and \eqref{eq3.6},}\\
 & >v_{p}(d)+v_{p}\left(\frac{c^{n}}{d^{n}n!}\right)-e(n)\text{ since }id<n,\\
 & =v_{p}(da_{n})\text{ by Proposition \ref{prop3.3}.}\qedhere
\end{align*}
\end{proof}
As mentioned in the introduction, we can consider (\ref{eq1.4}) as
a perturbation of the simpler equation (\ref{eq1.3}). Now we show
that the perturbation is insignificant in the following sense.
\begin{prop}
\label{prop4.2} Let $p$, $N$, $d$, and $c$ satisfy Condition
\ref{condA} or \ref{condB}. Then $v_{p}(b_{n})=v_{p}(a_{n})$ for
any $n\geq1$.
\end{prop}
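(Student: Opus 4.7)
The strategy is a strong induction on $n$ establishing the slightly stronger statement
\[
v_p(b_n - a_n) > v_p(a_n),
\]
which implies $v_p(b_n) = v_p(a_n)$ by the ultrametric inequality. The base case $n=1$ is trivial: comparing the coefficient of $x$ on both sides of (\ref{eq1.3}) and (\ref{eq1.4}) forces $a_1 = b_1 = c/d$, so $b_1 - a_1 = 0$.

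For the inductive step, I would expand $\bigl(1 + \sum y_k x^k\bigr)^d$ via the multinomial theorem and extract the coefficient of $x^n$, which takes the form $d y_n + Q_n(y_1,\ldots,y_{n-1})$, where
\[
Q_n(y_1,\ldots,y_{n-1}) = \sum \binom{d}{m_0,m_1,\ldots,m_{n-1}} \prod_{k=1}^{n-1} y_k^{m_k},
\]
the sum being over tuples with $\sum_{k \geq 0} m_k = d$ and $\sum_{k \geq 1} k m_k = n$, excluding the partition $(m_0,m_n) = (d-1,1)$. That excluded partition also forces $m_k = 0$ for $k \neq 0,n$, so every partition in $Q_n$ satisfies $m_0 \leq d - 2$. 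Matching $x^n$-coefficients in (\ref{eq1.3}) and (\ref{eq1.4}) and subtracting yields, for $n \geq 2$,
\[
d(b_n - a_n) = \bigl(Q_n(a) - Q_n(b)\bigr) + [d \mid n](b_{n/d} - a_{n/d}) + \sum_{1 \leq i < n/d} b_i \binom{i+1}{n-id} c^{n-id},
\]
where the trailing sum records the perturbation from $(1+cx)^{i+1}$ in (\ref{eq1.4}).

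I would then show each of the three contributions on the right has $p$-adic valuation strictly greater than $v_p(d a_n)$. For the trailing sum, the inductive hypothesis gives $v_p(b_i) = v_p(a_i)$, and Proposition \ref{prop4.1}(2) then forces $v_p(b_i c^{n-id}) > v_p(d a_n)$. For the middle term (when $d \mid n$), the strong induction gives $v_p(b_{n/d} - a_{n/d}) > v_p(a_{n/d})$, and Proposition \ref{prop4.1}(1) supplies $v_p(a_{n/d}) \geq v_p(d a_n)$, so the inequality remains strict after chaining. For each partition summand in $Q_n(a) - Q_n(b)$, the elementary fact that $v_p(x^m - y^m) > m v_p(x)$ whenever $v_p(x-y) > v_p(x) = v_p(y)$ promotes the induction to $v_p\bigl(\prod b_k^{m_k} - \prod a_k^{m_k}\bigr) > \sum m_k v_p(a_k)$; combining this with $v_p(d!/m_0!) \geq v_p(d)$ (forced by $m_0 \leq d-1$) and Proposition \ref{prop3.5} (which rearranges to $\sum m_k v_p(a_k) - \sum v_p(m_k!) \geq v_p(a_n)$) produces the required strict bound.

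The main obstacle will be the $Q_n(a) - Q_n(b)$ term: one must simultaneously extract $v_p(d)$ from the multinomial coefficient $d!/\prod m_k!$ and $v_p(a_n)$ from the product of $a_k$'s via Proposition \ref{prop3.5}, and then promote the resulting non-strict bound to a strict one by using the induction hypothesis on a single factor $b_k^{m_k} - a_k^{m_k}$. Once all three bounds are in place, the ultrametric inequality yields $v_p\bigl(d(b_n-a_n)\bigr) > v_p(d a_n)$, hence $v_p(b_n - a_n) > v_p(a_n)$, closing the induction.
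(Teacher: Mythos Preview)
Your proposal is correct and follows essentially the same argument as the paper's proof. Both prove the stronger statement $v_p(b_n-a_n)>v_p(a_n)$ by strong induction, subtract the degree-$n$ relations coming from (\ref{eq1.3}) and (\ref{eq1.4}), and bound the three resulting contributions using Proposition~\ref{prop4.1}(1), Proposition~\ref{prop4.1}(2), and Proposition~\ref{prop3.5} together with $v_p(d!/m_0!)\geq v_p(d)$; your version merely makes explicit the coefficients $q(n,i)=\binom{i+1}{n-id}$ and the telescoping mechanism behind the strict inequality for $Q_n(a)-Q_n(b)$. (Minor remark: you first state $m_0\leq d-2$ and later use $m_0\leq d-1$; the former is what actually holds in $Q_n$, though either suffices for $v_p(d!/m_0!)\geq v_p(d)$.)
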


\begin{proof}
We use induction to show that $v_{p}(a_{n}-b_{n})>v_{p}(a_{n})$,
which will then imply $v_{p}(b_{n})=v_{p}(a_{n})$. Considering the
degree $n$ terms of (\ref{eq1.3}) and (\ref{eq1.4}), we have
\[
da_{n}+\sum_{\substack{\sum_{k=0}^{n-1}m_{k}=d\\
\sum_{k=0}^{n-1}km_{k}=n
}
}{d \choose m_{0},m_{1},\dots,m_{n-1}}\prod_{k=1}^{n-1}a_{k}^{m_{k}}=\begin{cases}
a_{n/d}, & \text{if }d\mid n,\\
0, & \text{if }d\nmid n,
\end{cases}
\]
and
\begin{equation}
db_{n}+\sum_{\substack{\sum_{k=0}^{n-1}m_{k}=d\\
\sum_{k=0}^{n-1}km_{k}=n
}
}{d \choose m_{0},m_{1},\dots,m_{n-1}}\prod_{k=1}^{n-1}b_{k}^{m_{k}}=\sum_{id\leq n}q(n,i)b_{i}c^{n-id},\label{eq4.1}
\end{equation}
where $q(n,i)\in\mathbb{Z}$ and $q(n,n/d)=1$ if $d\mid n$. Therefore,
\begin{align*}
d(a_{n}-b_{n}) & +\sum_{\substack{\sum_{k=0}^{n-1}m_{k}=d\\
\sum_{k=0}^{n-1}km_{k}=n
}
}{d \choose m_{0},m_{1},\dots,m_{n-1}}\left(\prod_{k=1}^{n-1}a_{k}^{m_{k}}-\prod_{k=1}^{n-1}b_{k}^{m_{k}}\right)\\
 & =\begin{cases}
a_{n/d}-b_{n/d}-\sum_{id<n}q(n,i)b_{i}c^{n-id}, & \text{if }d\mid n,\\
-\sum_{id<n}q(n,i)b_{i}c^{n-id}, & \text{if }d\nmid n.
\end{cases}
\end{align*}
By the induction hypothesis and Proposition \ref{prop4.1}, we have
\[
v_{p}(a_{n/d}-b_{n/d})>v_{p}(a_{n/d})\geq v_{p}(da_{n})
\]
and
\begin{equation}
v_{p}(q(n,i)b_{i}c^{n-id})\geq v_{p}(a_{i}c^{n-id})>v_{p}(da_{n}).\label{eq4.2}
\end{equation}
By the induction hypothesis and Proposition \ref{prop3.5}, we have
\begin{align*}
v_{p} & \left({d \choose m_{0},m_{1},\dots,m_{n-1}}\left(\prod_{k=1}^{n-1}a_{k}^{m_{k}}-\prod_{k=1}^{n-1}b_{k}^{m_{k}}\right)\right)\\
 & =v_{p}\left({d \choose m_{0},m_{1},\dots,m_{n-1}}\left(\prod_{k=1}^{n-1}a_{k}^{m_{k}}-\prod_{k=1}^{n-1}(a_{k}-(a_{k}-b_{k}))^{m_{k}}\right)\right)\\
 & >v_{p}\left({d \choose m_{0},m_{1},\dots,m_{n-1}}\prod_{k=1}^{n-1}a_{k}^{m_{k}}\right)=v_{p}\left(\frac{d!}{m_{0}!}\prod_{k=1}^{n-1}\frac{a_{k}^{m_{k}}}{m_{k}!}\right)\geq v_{p}(da_{n}).
\end{align*}
Combining these inequalities together, we get $v_{p}(a_{n}-b_{n})>v_{p}(a_{n})$
and $v_{p}(b_{n})=v_{p}(a_{n})$.
\end{proof}
A consequence of Proposition \ref{prop4.2} is that Propositions \ref{prop3.3},
\ref{prop3.4}, and \ref{prop3.5} remain true if we replace $a_{n}$
by $b_{n}$. Therefore, the proof of Theorem \ref{thm1.3} is essentially
the same as the proof of Theorem \ref{thm1.2}.

\section{\label{sec5} Proof of Theorem \ref{thm1.7}}

In this section, we give the proof of Theorem \ref{thm1.7}.

If $v_{p}(c_{1})\geq0$ and $\Phi_{c_{1},c_{2}}:B(c_{1})=D(\infty,1)\to B(c_{2})$
exists, then $\Phi_{c_{1},c_{2}}$ must be of the form
\[
\Phi_{c_{1},c_{2},\omega}(z)=\omega z\left(1+\sum_{n=1}^{\infty}\frac{t_{n}}{z^{nd}}\right)
\]
for some $\omega$ with $\omega^{d-1}=1$. Let $x=z^{-d}$, then (\ref{eq1.7})
can be simplified as
\begin{align*}
\left(1+\sum_{n=1}^{\infty}t_{n}x^{n}\right)^{d} & =1+(\omega^{-1}c_{2}-c_{1})x+\sum_{n=1}^{\infty}\frac{t_{n}x^{nd}}{(1-c_{1}x)^{nd-1}}\\
 & =1+(\omega^{-1}c_{2}-c_{1})x+\sum_{n=d}^{\infty}\sum_{id\leq n}q'(n,i)t_{i}c_{1}^{n-id}x^{n},
\end{align*}
where $q'(n,i)\in\mathbb{Z}$ and $q'(n,n/d)=1$ if $d\mid n$. We
can imitate the proof of Proposition \ref{prop4.2} to prove the following
proposition.
\begin{prop}
\label{prop5.1} Let $p$, $N$, $d$, and $c=\omega^{-1}c_{2}-c_{1}$
satisfy Condition \ref{condA} or \ref{condB}. Let $c_{1}$ satisfy
$v_{p}(c_{1})\geq0$ and $v_{p}(c_{1})\geq v_{p}(c)$, then
\begin{enumerate}
\item We have $v_{p}(t_{n})=v_{p}(a_{n})$ for any $n\geq1$.
\item The maximal convergent open disks of $\Phi_{c_{1},c_{2},\omega}(z)$
and $\Phi_{c_{1},c_{2},\omega}^{-1}(z)$ are both $D(\infty,r_{N}^{1/d})$.
Moreover, $\Phi_{c_{1},c_{2},\omega}(z)$ gives a bijective isometry
from $D(\infty,r_{N}^{1/d})$ onto itself.
\item $\Phi_{c_{1},c_{2},\omega}(z)$ does not give an analytic conjugacy
between $B(c_{1})$ and $B(c_{2})$.
\end{enumerate}
\end{prop}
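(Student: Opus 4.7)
The argument is a direct adaptation of the proofs of Proposition~\ref{prop4.2} and Theorem~\ref{thm1.2}, with $c=\omega^{-1}c_2-c_1$ playing the role that $c$ played in (\ref{eq1.3}), and with $c_1$ entering as a secondary perturbation on the right-hand side of the functional equation.

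For part (1) I would induct on $n\geq 1$ and show that $v_p(a_n-t_n)>v_p(a_n)$; this yields $v_p(t_n)=v_p(a_n)$ immediately. The base cases $1\leq n<d$ are trivial, since in that range both $a_n$ and $t_n$ are determined by the same truncated equation (the sums on the right contribute only from degree $d$ onward) and so $a_n=t_n$. For the inductive step, comparing the degree $n$ coefficient on both sides of the displayed functional equation for $t_n$ with (\ref{eq1.3}) gives
\[
d(a_n-t_n)+\sum_{\substack{\sum m_k=d\\ \sum km_k=n}}\binom{d}{m_0,\dots,m_{n-1}}\Bigl(\prod_{k=1}^{n-1}a_k^{m_k}-\prod_{k=1}^{n-1}t_k^{m_k}\Bigr)=R_n,
\]
where $R_n=a_{n/d}-t_{n/d}-\sum_{id<n}q'(n,i)t_ic_1^{n-id}$ when $d\mid n$ and $R_n=-\sum_{id<n}q'(n,i)t_ic_1^{n-id}$ otherwise. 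The middle multinomial difference is handled as in Proposition~\ref{prop4.2} via Proposition~\ref{prop3.5}; the term $a_{n/d}-t_{n/d}$ is controlled by the induction hypothesis together with Proposition~\ref{prop4.1}(1); and each remaining summand is controlled by the chain
\[
v_p\bigl(q'(n,i)t_ic_1^{n-id}\bigr)\geq v_p(t_i)+(n-id)v_p(c_1)=v_p(a_i)+(n-id)v_p(c_1)\geq v_p(a_ic^{n-id})>v_p(da_n),
\]
using $v_p(q'(n,i))\geq 0$, the induction hypothesis $v_p(t_i)=v_p(a_i)$, the assumption $v_p(c_1)\geq v_p(c)$, and Proposition~\ref{prop4.1}(2). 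This last chain is precisely where the hypothesis $v_p(c_1)\geq v_p(c)$ is consumed, and it is the sole substantive departure from the proof of Proposition~\ref{prop4.2}.

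Part (2) then follows from part (1): since $v_p(t_n)=v_p(a_n)$, Proposition~\ref{prop3.4} applied to $a_n$ with parameter $c=\omega^{-1}c_2-c_1$ (which satisfies Condition~\ref{condA} or~\ref{condB} by hypothesis) gives that $\Phi_{c_1,c_2,\omega}(z)$ converges exactly on $D(\infty,r_N^{1/d})$. The statement for $\Phi_{c_1,c_2,\omega}^{-1}(z)$ and the bijective isometry on that disk are obtained by the same arguments that conclude the proof of Theorem~\ref{thm1.2}, namely Lemma~\ref{lem2.5} combined with the analogue of Proposition~\ref{prop3.5} for $t_n$, and the telescoping computation of $(\Phi_{c_1,c_2,\omega}(z)-\Phi_{c_1,c_2,\omega}(w))/(z-w)$.

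For part (3), the hypothesis $v_p(c_1)\geq 0$ gives $B(c_1)=D(\infty,1)$, and since $r_N>1$ by Theorem~\ref{thm1.2}, the maximal domain of convergence $D(\infty,r_N^{1/d})$ of $\Phi_{c_1,c_2,\omega}$ is a strict subset of $B(c_1)$. Thus $\Phi_{c_1,c_2,\omega}$ cannot extend to an analytic conjugacy $B(c_1)\to B(c_2)$. I expect the only real obstacle to be the valuation bookkeeping in the inductive step of part (1), which must be arranged carefully so that the new perturbation terms $t_ic_1^{n-id}$ are uniformly dominated by $v_p(da_n)$; once that is in place, parts (2) and (3) follow formally.
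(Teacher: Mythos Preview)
Your proposal is correct and follows essentially the same approach as the paper: part (1) is proved by the induction of Proposition~\ref{prop4.2} with the replacement $v_p(q'(n,i)t_ic_1^{n-id})\geq v_p(a_ic^{n-id})>v_p(da_n)$ (using $v_p(c_1)\geq v_p(c)$) in place of (\ref{eq4.2}), part (2) repeats the proof of Theorem~\ref{thm1.2}, and part (3) is the observation that $D(\infty,r_N^{1/d})\subsetneq D(\infty,1)=B(c_1)$.
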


\begin{proof}
The proof of part (1) is essentially the same as the proof of Proposition
\ref{prop4.2}, except that we need to replace $b_{n}$ by $t_{n}$,
replace (\ref{eq4.1}) by
\[
dt_{n}+\sum_{\substack{\sum_{k=0}^{n-1}m_{k}=d\\
\sum_{k=0}^{n-1}km_{k}=n
}
}{d \choose m_{0},m_{1},\dots,m_{n-1}}\prod_{k=1}^{n-1}t_{k}^{m_{k}}=\sum_{id\leq n}q'(n,i)t_{i}c_{1}^{n-id},
\]
and replace (\ref{eq4.2}) by
\[
v_{p}(q'(n,i)t_{i}c_{1}^{n-id})\geq v_{p}(a_{i}c^{n-id})>v_{p}(da_{n}).
\]
The proof of part (2) is essentially the same as the proof of Theorem
\ref{thm1.2}. Part (3) follows because $D(\infty,r_{N}^{1/d})$ is
strictly smaller than $B(c_{1})=D(\infty,1)$.
\end{proof}
Now we are ready to give the proof of Theorem \ref{thm1.7}.
\begin{proof}
[Proof of Theorem \ref{thm1.7}] By (\ref{eq1.8}), we have $v_{p}(c_{1})\geq v_{p}(c_{2})=v_{p}(\omega^{-1}c_{2}-c_{1})$
for any $\omega$ with $\omega^{d-1}=1$. By Proposition \ref{prop5.1},
none of $\Phi_{c_{1},c_{2},\omega}(z)$ gives an analytic conjugacy
between $B(c_{1})$ and $B(c_{2})$.
\end{proof}

\end{document}